\documentclass[11pt]{article}
\usepackage{amsmath, comment}
\usepackage{amsthm}
\usepackage{amssymb}
\usepackage{amsfonts}

\usepackage{authblk}
\usepackage{dsfont}
\usepackage{color}

\newcommand{\dd}{\mathrm{d}}
\newcommand{\R}{\mathbb{R}}

\newcommand{\E}{\mathbf{E}}
\newcommand{\p}{\mathbf{P}}

\newcommand{\bone}{\mathds 1}
\newcommand{\dint}{\int \hspace*{-5pt} \int}
\newcommand{\trint}{\int \hspace*{-5pt} \int \hspace*{-5pt} \int}

\DeclareMathOperator{\Leb}{Leb}

\theoremstyle{plain}
\newtheorem{lemma}{Lemma}
\newtheorem{theorem}{Theorem}

\newtheorem{corollary}{Corollary}

\theoremstyle{remark}

\begin{document}

\title{Path properties of L\'evy driven mixed moving average processes\thanks{Dedicated to Professor Nikolai Leonenko on the occasion of his 75th birthday.}}

\author[1]{Danijel Grahovac\thanks{dgrahova@mathos.hr}}
\author[2]{P\'eter Kevei\thanks{kevei@math.u-szeged.hu}}
\author[3]{Orimar Sauri\thanks{osauri@math.aau.dk}}
\affil[1]{School of Applied Mathematics and Informatics, J. J. Strossmayer University of Osijek, Croatia}
\affil[2]{Bolyai Institute, University of Szeged, Hungary}
\affil[3]{Department of Mathematical Sciences, Aalborg University, Denmark}
\date{}

\maketitle

\begin{abstract}
We derive general sufficient conditions for the existence of c\`adl\`ag and continuous modifications of L\'evy-driven mixed moving average processes. The conditions are explicit and easy to verify and applied to supOU, well-balanced supOU, trawl, and power-weighted supOU processes. In these examples, the conditions are shown to be close to optimal.

\textit{Keywords:} L\'evy bases; mixed moving average process; supOU process; 
trawl process; c\`adl\`ag modificiation.

\textit{MSC2020:} 60G17, 60G10.
\end{abstract}	
	
\section{Introduction and main results}
	
Consider the \textit{mixed moving average} (MMA) process
\begin{equation}\label{eq:def_X}
	X(t) = \dint_{ V \times \R} f(x,t-u) \Lambda(\dd x, \dd u),\,\,t\in\R,
\end{equation}
where $V$ is a topological space with Borel $\sigma$-algebra $\mathcal{B}(V)$, 
$f: V \times \R \to [0,\infty)$ is a deterministic nonnegative measurable function and $\Lambda$ is an infinitely divisible independently scattered 
random measure, given by  
\[
\Lambda(\dd x, \dd u) = m \pi(\dd x) \dd u + 
\int_{(0,1]} z (\mu - \nu)(\dd x, \dd u , \dd z) + 
\int_{(1,\infty)} z \mu(\dd x, \dd u , \dd z).
\]
Here, $m \in \R$,
$\mu$ is Poisson random measure with intensity
$\nu(\dd x, \dd u, \dd z) = \pi(\dd x) \dd u \lambda(\dd z)$, where $\pi$ is a measure on $V$ and $\lambda$ is a L\'evy measure on $(0,\infty)$. This class of random measures is often referred to as \textit{L\'evy bases}, see \cite{barndorff2018ambit} for more details.  
For simplicity, we assume that $f$ is nonnegative and that 
$\lambda$ is concentrated to $(0,\infty)$. Neither of these assumptions
is essential, and the corresponding results in the general case can be 
easily formulated.

The necessary and sufficient conditions for the existence of the integral in 
\eqref{eq:def_X} are given in \cite[Theorem 2.7]{rajputrosinski1989}, see also \cite[Chapter 5]{barndorff2018ambit}. In what follows, we assume that the integral in 
\eqref{eq:def_X} is well-defined. 
Observe that MMA processes are infinitely divisible (ID) and strictly stationary. Note that classical  L\'evy-driven ID moving average processes arise 
when $V $ is a singleton. Well-known specific members of the MMA class are  
superpositions of Ornstein--Uhlenbeck-type (\textit{supOU}) processes and \textit{trawl} processes.

There is a vast, recent literature on these processes in both applied and 
theoretical probability. 
SupOU models have been successfully used in finance to model stochastic volatility, see e.g.,~\cite{curato2019}, and the references therein. 
Besides applications in finance, integer valued trawl models are 
frequently used in queueing theory \cite{barndorff2014intTrawl}.
Extremal behavior was investigated in \cite{rm2022} 
for L\'evy-driven $d$-dimensional moving average processes, and 
in \cite{Fasen05} for MMA process with $V = \R$, both with heavy-tailed
jump measure $\lambda$.
Distributional limit theorems were studied in 
\cite{GLT19, GLT2019Limit, GLT21} for supOU processes,
and in \cite{talarczyk2020, pakkanen2021} for trawl processes.
In \cite{GK,GK2}, the almost sure growth rate of
supOU and integrated supOU processes was studied.

To the best of our knowledge, there are only a few results on the path properties 
of MMA processes. For symmetric $\alpha$-stable driving measure 
with $\alpha \in (1,2)$, sufficient conditions for the existence of a c\`adl\`ag modification were obtained in 
\cite[Theorem 4.3]{BasseRos} for general kernel, and in \cite{basse2020sufficient} for supOU processes.
In \cite{BasseRos2013b} both sufficient conditions
and necessary conditions were provided for the existence of absolutely 
continuous sample paths. Sufficient conditions for certain sample path properties of ID processes were obtained in \cite{rosinski1989}. In the present paper, we obtain conditions ensuring that $X$ has a c\`adl\`ag modification.

The main motivation for this work is Proposition 1 in \cite{GK}, where it was shown 
that under natural conditions the supremum of a supOU process on 
any nonempty open interval is almost surely infinite. 
In particular, there exists no c\`adl\`ag modification.
Such path behavior was obtained in \cite{Maejima} for linear fractional
stable motion and in \cite[Theorem 3.7]{CDH} for the solution of the stochastic
heat equation.

Our main result is the following general sufficient condition for the 
existence of a c\`adl\`ag modification. We combine Billingsley's classical 
criteria (\cite[Theorem 13.6]{Billingsley}) with a moment bound.
In what follows, $\Leb$ stands for the Lebesgue measure on $\R$,
$a \wedge b = \min \{ a, b\}$, and $a \vee b = \max \{ a, b \}$.

\begin{theorem} \label{thm:cadlag}
Let $f:V\times\R\to [0,\infty)$ be a measurable function such that $f(x, \cdot)$ is c\`adl\`ag for each $x \in V$, and $X$ in \eqref{eq:def_X} is well-defined. Assume that there exist a Borel set $A \subset V \times \R$ and a function 
$g$, such that $(\pi \times \Leb) (A) < \infty$,  
for all $(x,-u) \not\in A$
\begin{equation} \label{eq:f-ass-bound}
\sup_{t \in [0,1] }  f(x,t-u) \leq g(x, -u), 
\end{equation}
and
\begin{equation} \label{eq:f-g-ass}
\trint_{A^c \times (1,\infty)} 
( g(x,-u) z  \wedge 1) \, \pi(\dd x) \dd u \lambda(\dd z) <\infty.
\end{equation}
Suppose in addition that there exist $\alpha \in [1,2]$, $C > 0$, 
and $\varepsilon > 0$ such that
$\int_{(0,1]} z^\alpha \lambda(\dd z) < \infty$,
for $0 \leq s < t $
\begin{equation} \label{eq:f-int-ass1_mma}
\dint_{V \times \R} \left| f(x,t-u) - f(x,-u) \right|^{\alpha} 
\pi(\dd x) \dd u \leq C t^{1/2+\varepsilon/2}, 
\end{equation}
and
\begin{equation} \label{eq:f-int-ass2_mma}
\dint_{V \times \R} 
| f(x,t-u) - f(x, s-u)|^\alpha  \wedge 
| f(x,s-u) - f(x, -u)|^\alpha  \pi(\dd x) \dd u
\leq C t^{1+\varepsilon}.
\end{equation}
Then $X$ has a c\`adl\`ag modification.
\end{theorem}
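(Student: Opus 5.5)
We split $X$ into a part with finitely many relevant big jumps and a part driven by small jumps, and treat them separately. Write $X = X_1 + X_2$, where
\[
X_1(t) = \dint f(x,t-u)\Big( m\,\pi(\dd x)\dd u + \int_{(0,1]} z(\mu-\nu)(\dd x,\dd u,\dd z)\Big)
\]
is the small-jump part (plus drift), and $X_2(t)=\trint f(x,t-u) z\,\bone_{(1,\infty)}(z)\,\mu(\dd x,\dd u,\dd z)$ collects the big jumps. Stationarity lets us work on $[0,1]$; the result extends to $\R$ by covering it with countably many unit intervals.

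\textbf{Big jumps.} Split $X_2$ further according to whether $(x,-u)\in A$ or not. Since $(\pi\times\Leb)(A)<\infty$ and $\lambda((1,\infty))<\infty$, the restriction of $\mu$ to $A\times(1,\infty)$ has almost surely finitely many atoms. That part is therefore a finite sum of terms $z_i f(x_i,t-u_i)$, and it is c\`adl\`ag because each $f(x,\cdot)$ is. On $A^c$ we use \eqref{eq:f-ass-bound} and \eqref{eq:f-g-ass}. Condition \eqref{eq:f-g-ass} implies that only finitely many atoms have $g(x,-u)z>1$, again because $\nu$ is finite on that set. The remaining atoms satisfy $\sum z_i g(x_i,-u_i)<\infty$ a.s., since the integral of $gz\wedge1$ is finite. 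Combined with \eqref{eq:f-ass-bound}, the series $\sum_i z_i f(x_i,t-u_i)$ then converges uniformly on $[0,1]$. Each summand is c\`adl\`ag, so the uniform limit is c\`adl\`ag. Hence $X_2$ has c\`adl\`ag paths on $[0,1]$ directly, without any modification.

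\textbf{Small jumps.} For $X_1$ we apply Billingsley's criterion \cite[Theorem 13.6]{Billingsley}. In the form we need, it says that a process continuous in probability satisfying
\[
\E\big[|X_1(t)-X_1(s)|^{\beta}|X_1(s)-X_1(r)|^{\beta}\big] \le (F(t)-F(r))^{1+\delta}, \qquad r\le s\le t,
\]
for some continuous nondecreasing $F$ and $\delta>0$, has a c\`adl\`ag modification. Here we take $\beta=\alpha/2$. By the Cauchy–Schwarz inequality, or more precisely by a moment bound for products of stochastic integrals, the left side is controlled by quantities built from the increments of $f$.

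The key moment bound is the following. For a deterministic kernel $h$ and the compensated small-jump measure, when $\int_{(0,1]} z^\alpha\lambda(\dd z)<\infty$ and $\alpha\in[1,2]$, the Bichteler–Jacod / Rosenthal-type inequality gives
\[
\E\Big|\trint h\, z\,(\mu-\nu)\Big|^\alpha \le C\dint |h|^\alpha\,\pi(\dd x)\dd u .
\]
The drift part needs to be absorbed. I expect to handle it either through the $\alpha\ge1$ condition plus a Hölder bound, or by noting that the drift term is a deterministic continuous function of $t$ (it is constant by stationarity), so it can simply be dropped.

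For the product moment, write $D_1=X_1(t)-X_1(s)$ and $D_2=X_1(s)-X_1(-0)$, with kernels $h_1=f(\cdot,t-u)-f(\cdot,s-u)$ and $h_2=f(\cdot,s-u)-f(\cdot,-u)$, after shifting so that $r=0$. The idea is to split the integration domain into the set $B$ where $|h_1|\le|h_2|$ and its complement. On $B$ we write $D_1=D_1^B+D_1^{B^c}$, and similarly for $D_2$. Then $\E|D_1|^{\alpha/2}|D_2|^{\alpha/2}$ is bounded by a sum of terms $(\E|D_1^{\cdot}|^\alpha)^{1/2}(\E|D_2^{\cdot}|^\alpha)^{1/2}$.

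In each term, one factor involves the minimum $|h_1|^\alpha\wedge|h_2|^\alpha$ over a region, and that factor is bounded by $Ct^{1+\varepsilon}$ via \eqref{eq:f-int-ass2_mma}. The other factor is bounded by a full increment, which is $\le Ct^{1/2+\varepsilon/2}$ by \eqref{eq:f-int-ass1_mma} and stationarity. Taking square roots gives a product of order $t^{(1+\varepsilon)/2}\,t^{(1/2+\varepsilon/2)/2}$, which is at least $t^{1+\delta}$-small if the exponents are arranged correctly. If the pairing comes out slightly off, I would instead use Hölder with different exponents so that the exponents add up to $1+\varepsilon/2$.

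Continuity in probability follows from \eqref{eq:f-int-ass1_mma}.

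\textbf{Main obstacle.} I expect the main difficulty to be this product-moment step. $D_1$ and $D_2$ are dependent, with overlapping kernels, so we cannot simply factor $\E|D_1|^{\alpha/2}|D_2|^{\alpha/2}$. Bounding it by integrals of the minimum of the two increments, rather than by their product, is exactly what the $\wedge$ in \eqref{eq:f-int-ass2_mma} suggests. Getting the exponents to reach $1+\delta$ will require choosing the splitting and the Hölder exponents carefully.
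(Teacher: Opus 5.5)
Your handling of the big-jump part (finitely many atoms on $A$, domination by $\sum z_i g(x_i,-u_i)$ on $A^c$, a uniform limit of c\`adl\`ag functions) and of continuity in probability is correct and matches the paper. The small-jump step, however, has a genuine gap: the product-moment route with Cauchy--Schwarz cannot produce $t^{1+\delta}$ from \eqref{eq:f-int-ass1_mma} and \eqref{eq:f-int-ass2_mma}.

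Take $h_1,h_2$ and $B=\{|h_1|\le|h_2|\}$ as you set them up. Your claim that ``in each term one factor involves the minimum'' is false for the pairing of $D_1^{B^c}$ with $D_2^{B}$. Both are the large parts, so even using their independence you only get $\E|D_1^{B^c}|^{\alpha/2}\,\E|D_2^{B}|^{\alpha/2}\le (\E|D_1^{B^c}|^{\alpha})^{1/2}(\E|D_2^{B}|^{\alpha})^{1/2}\le Ct^{(1+\varepsilon)/2}$. The dependent terms are also too weak: $t^{(1+\varepsilon)/2}\cdot t^{(1/2+\varepsilon/2)/2}=t^{3(1+\varepsilon)/4}$, which is not of order $t^{1+\delta}$ unless $\varepsilon>1/3$, so your exponent arithmetic there is off.

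Re-choosing H\"older exponents does not help. Only $\alpha$-th moments are controlled, so bounding a dependent term $\E|D_1^B|^a|D_2^B|^b$ by H\"older forces $a+b\le\alpha$. Jensen then bounds the independent large--large term at best by $t^{(1+\varepsilon)(a+b)/(2\alpha)}\le t^{(1+\varepsilon)/2}$, which is insufficient for $\varepsilon\le 1$.

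The missing idea is to work directly with the tail form of Billingsley's Theorem 13.6, for which the hypotheses are calibrated. Integrals over disjoint sets are independent, so
\[
\p(|D_1|\wedge|D_2|>y)\le \p(|D_1^{B}|>y/2)+\p(|D_2^{B^c}|>y/2)+\p(|D_1^{B^c}|>y/2)\,\p(|D_2^{B}|>y/2).
\]
Markov's inequality with the $\alpha$-moment bound then gives
\[
Cy^{-\alpha}\dint(|h_1|\wedge|h_2|)^\alpha\,\pi(\dd x)\dd u+Cy^{-2\alpha}\dint|h_1|^\alpha\,\pi(\dd x)\dd u\dint|h_2|^\alpha\,\pi(\dd x)\dd u\le C(y^{-\alpha}+y^{-2\alpha})t^{1+\varepsilon}.
\]
The union bound lets the dependent min-parts enter linearly, with no square root lost. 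The two large parts enter at full power $\alpha$ through a product of independent tails, which is exactly why \eqref{eq:f-int-ass1_mma} carries the exponent $1/2+\varepsilon/2$. No single moment $\E|D_1|^a|D_2|^b$ can achieve both effects at once. This is the content of Lemma~\ref{lemma:decomp-ineq}; the additional split into time regions there is not essential.
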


When the kernel $f$ is continuous, we obtain a simpler criterion 
for the existence of a continuous modification.
	
\begin{theorem} \label{thm:cont}
	Let $f: V \times \R \to [0,\infty)$ be a measurable function such that
	$f(x, \cdot)$ is continuous for each $x \in V$, and $X$ in \eqref{eq:def_X} is well-defined. Assume that 
	there exist a Borel set $A \subset V \times \R$ and a function 
	$g$, such that $(\pi \times \mathrm{Leb}) (A) < \infty$, 
	\eqref{eq:f-ass-bound} holds for all $(x,-u) \not\in A$, and 
	\eqref{eq:f-g-ass} holds.
	Assume further that there exist $\alpha \in [1,2]$, $C > 0$, 
	and $\varepsilon > 0$ such that
	$\int_{(0,1]} z^\alpha \lambda(\dd z) < \infty$, and
	for $0 \leq s < t $
	\begin{equation} \label{eq:cont-ass}
		\dint_{V \times \R} | f(x,t-u) - f(x, s-u)|^\alpha \pi(\dd x) \dd u
		\leq C t^{1+\varepsilon}.
	\end{equation}
	Then $X$ has a continuous modification.
\end{theorem}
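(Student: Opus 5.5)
We would prove Theorem~\ref{thm:cont} by splitting $X$ into a part with finitely many jumps on the relevant region and a part that has small, controlled increments. The first part is handled pathwise; the second by the Kolmogorov--Chentsov criterion. It suffices to work on $[0,1]$, since stationarity and a countable union of unit intervals give the result on $\R$.

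\emph{Step 1: removing the big jumps.} Let $\mu_1$ be the restriction of $\mu$ to
\[
\bigl(A \times (1,\infty)\bigr) \cup \bigl\{(x,u,z) : (x,-u)\notin A,\ z>1,\ g(x,-u) z > 1\bigr\}.
\]
Its intensity is finite: on the first set because $(\pi\times\Leb)(A)<\infty$ and $\lambda((1,\infty))<\infty$; on the second by \eqref{eq:f-g-ass}. Hence $\mu_1$ has almost surely finitely many atoms, and
\[
X_1(t)=\sum f(x,t-u)\,z
\]
over these atoms is a finite sum of continuous functions of $t$. (The convention $(x,-u)\in A$ versus $(x,u)\in A$ only changes which set is reflected; we will fix it consistently.)

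The remaining big jumps $z>1$ lie off $A$ and satisfy $g(x,-u)z\le 1$. There, by \eqref{eq:f-ass-bound}, $\sup_{t\in[0,1]} f(x,t-u)z \le g(x,-u)z \le 1$, so this piece can be compensated. The compensator $\int f(x,t-u) z\,\bone\{gz\le1\}\,\dd\nu$ is a deterministic function of $t$. We will check that it is finite and continuous: near the diagonal it is dominated by $\int (gz\wedge 1)\,\dd\nu<\infty$ from \eqref{eq:f-g-ass}, and continuity follows by dominated convergence, since $f(x,\cdot)$ is continuous and dominated by $g$. The drift $m\int f\,\dd\pi\,\dd u$ is constant in $t$ by stationarity, if it exists; otherwise it is absorbed together with the compensators through the well-definedness of $X$.

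\emph{Step 2: moment bound for the remainder.} The remainder $X_2$ is a compensated Poisson integral against $\tilde\mu-\tilde\nu$, with marks $z\,\bone\{z\le1\}+z\,\bone\{z>1,\ gz\le1,\ (x,-u)\notin A\}$. By the $L^\alpha$ moment inequality for compensated Poisson integrals with $\alpha\in[1,2]$ (a Bichteler--Jacod/BDG-type bound),
\[
\E|X_2(t)-X_2(s)|^\alpha \le c\int |f(x,t-u)-f(x,s-u)|^\alpha |z|^\alpha \,\dd\nu .
\]
For small $z$ we bound this by $\int_{(0,1]}z^\alpha\,\dd\lambda$ times \eqref{eq:cont-ass}, giving $C|t-s|^{1+\varepsilon}$; stationarity lets us take $s=0$ and $t$ replaced by $t-s$.

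For large $z$, the increment times $z$ is at most $2gz\le 2$. So we bound $|\Delta f|^\alpha z^\alpha$ by $2^{\alpha-1}|\Delta f|\,z$ with the truncation $\wedge 2$. The difficulty is that this yields a first-moment quantity, not \eqref{eq:cont-ass}.

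\emph{Main obstacle.} The hard part is controlling this large-jump compensated piece with only assumption \eqref{eq:cont-ass}. We would first try to also remove the atoms with $z>1$ off $A$ entirely. The idea is to show that a.s.\ only finitely many of them have $\sup_{t\in[0,1]} f(x,t-u)z>\delta$; the rest, with jumps bounded by $g(x,-u)z\le1$, we would treat as a Poisson sum of continuous functions. Its uniform convergence on $[0,1]$ would follow from the summability in \eqref{eq:f-g-ass} via a maximal inequality, or from an Itô--Nisio-type argument for sums of independent continuous processes. If that fails, we would fall back on moment assumptions on $\lambda$ at infinity.

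Once $\E|X_2(t)-X_2(s)|^\alpha\le C|t-s|^{1+\varepsilon}$ is established, Kolmogorov--Chentsov gives a continuous modification of $X_2$. Adding the continuous pieces from Step 1 then gives a continuous modification of $X$.
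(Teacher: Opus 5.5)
Your treatment of the small jumps matches the paper's. The $L^\alpha$ inequality for compensated Poisson integrals, together with \eqref{eq:cont-ass} and translation invariance, gives $\E|\cdot|^\alpha\le C(t-s)^{1+\varepsilon}$ for the $z\le 1$ part, and Kolmogorov's theorem then applies.

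The genuine gap is the part driven by jumps $z>1$ off $A$. You leave it as an unresolved ``main obstacle'', with a fallback to moment conditions on $\lambda$ at infinity that the theorem does not assume. The obstacle comes from your choice to compensate the atoms with $z>1$ and $g(x,-u)z\le 1$. The resulting $L^\alpha$ bound involves
$\int|f(x,t-u)-f(x,s-u)|^\alpha z^\alpha\bone\{z>1,\ g(x,-u)z\le 1\}\,\dd\nu$.
This mixes lower powers of the increments of $f$ with the tail of $\lambda$, and \eqref{eq:cont-ass} does not control it. So Kolmogorov--Chentsov cannot be used on that piece without extra assumptions.

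The missing observation is that this piece should not be compensated at all, and it needs no probabilistic estimate.

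\begin{itemize}
\item Since $f\ge 0$ and $z>0$, \eqref{eq:f-ass-bound} gives a termwise bound for every atom $(\xi_i,\tau_i,\zeta_i)$ of $\mu$ with $(\xi_i,\tau_i)\in A^c$ and $\zeta_i>1$: $\sup_{t\in[0,1]}\zeta_i f(\xi_i,t-\tau_i)\le\zeta_i g(\xi_i,-\tau_i)$.
\item Condition \eqref{eq:f-g-ass} is exactly the necessary and sufficient condition for the uncompensated Poisson integral $\sum_i\zeta_i g(\xi_i,-\tau_i)$ to be finite almost surely.
\item Hence, almost surely, $t\mapsto\sum_i\zeta_i f(\xi_i,t-\tau_i)$ is a series of continuous functions dominated by a summable sequence of constants. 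The Weierstrass M-test then gives uniform convergence on $[0,1]$ to a continuous limit.
\end{itemize}

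This handles all big jumps off $A$ at once, so splitting off those with $gz>1$ is unnecessary. The atoms over $A$ are a.s.\ finitely many, as you note, and the drift does not depend on $t$. This is Lemma~\ref{lemma:X2} of the paper.

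With it, your Step~2 is only needed for the $z\le 1$ part, where your bound is valid. The proof then closes with no condition on $\lambda$ at infinity beyond \eqref{eq:f-g-ass}. The maximal-inequality or It\^o--Nisio argument you suggest is far heavier than what is required here.
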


The rest of the paper is organized as follows. In Section \ref{sect:examples}
we spell out the general theorem for specific examples. We also 
demonstrate that our conditions are close to optimal by
showing that, if some are omitted, then no c\`adl\`ag modification exists.
The proofs of the main results are in Section \ref{sect:proof}.

\section{Examples} \label{sect:examples}

\subsection{SupOU processes}

Let $V=(0,\infty)$ and $f(x,s)= e^{-xs}\bone(s \geq 0)$. 
Assume that 
\begin{equation} \label{eq:supOU-cond}
\int_{(0,\infty)} x^{-1} \pi(\dd x)<\infty  \quad \text{and } \
\int_{(1,\infty)} \log z \, \lambda(\dd z)<\infty,
\end{equation}
which are the 
necessary and sufficient conditions for the existence of the 
integral in \eqref{eq:supOU-def} below. 
The superposition of Ornstein--Uhlenbeck type processes (\textit{supOU}) is defined as
\begin{equation} \label{eq:supOU-def}
X(t) = \dint_{(0,\infty) \times (-\infty, t]} e^{-x(t-s)} \Lambda(\dd x, \dd s).
\end{equation}

SupOU processes 
were introduced by Barndorff-Nielsen \cite{Barn} 
with slightly different parametrization.
The marginal distribution and the dependence 
structure of these processes can be chosen separately, 
allowing for a flexible model (see \cite{bnleonenko2005} for examples). 
SupOU processes have an interesting and rich limiting behavior.
Depending on the L\'evy measure $\lambda$ and the dependence measure 
$\pi$ the limiting process of the centered and scaled
integrated supOU process can be
Brownian motion, fractional Brownian motion, stable L\'evy process,
or stable process with dependent increments, see \cite{GLT19, GLT2019Limit, GLT21}.

To the best of our knowledge, there are only few results on 
the existence of a c\`adl\`ag modification of such processes.
If the underlying L\'evy basis has finite variation, that is
$\int_{(0,1]} z \lambda(\dd z) < \infty$, then 
Theorem 3.12 in \cite{barndorff2011multivariate} gives
conditions in the multivariate setup for the existence of a c\`adl\`ag modification
which has finite variation on compact sets. 
For symmetric $\alpha$-stable L\'evy bases with $\alpha \in (1,2)$, 
the existence  of a  c\`adl\`ag modification was obtained in
\cite{basse2020sufficient}.

Note that for the existence of the process, $\pi$ does not need to be a 
finite measure. On the other hand, this is assumed in most of the 
literature, among others in 
\cite{barndorff2011multivariate, basse2020sufficient}.

\begin{corollary} \label{corr:supOU}
Let $\lambda$ be a L\'evy measure on $(0,\infty)$, 
and $\pi$ be a measure on $(0,\infty)$ such that 
\eqref{eq:supOU-cond} holds. If for  
some $\varepsilon > 0$
\begin{equation} \label{eq:supOU-int-ass}
\int_{(1,\infty)} x^\varepsilon \pi(\dd x) < \infty, 
\end{equation}
then the supOU process $X$ in \eqref{eq:supOU-def} 
has a c\`adl\`ag modification.
\end{corollary}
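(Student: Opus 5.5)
We apply Theorem~\ref{thm:cadlag} with $V=(0,\infty)$, $f(x,s)=e^{-xs}\bone(s\ge 0)$ and $\alpha=2$; then $\int_{(0,1]}z^2\lambda(\dd z)<\infty$ holds automatically for a L\'evy measure. First, note that $\pi$ is a finite measure. By \eqref{eq:supOU-cond}, $\pi((0,1])\le\int x^{-1}\pi(\dd x)<\infty$, and by \eqref{eq:supOU-int-ass}, $\pi((1,\infty))\le\int_{(1,\infty)}x^\varepsilon\pi(\dd x)<\infty$. Hence $\int (x^{-1}+1+x^{\eta})\,\pi(\dd x)<\infty$ with $\eta:=\min(\varepsilon,1)$. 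This $\eta$ will play the role of the $\varepsilon$ of Theorem~\ref{thm:cadlag}. Throughout, write $v=-u$ and use the elementary bound $1-e^{-y}\le y\wedge 1$ for $y\ge0$.

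\emph{Choice of $A$ and $g$.} Take $A=V\times[-1,0)$ in the $(x,v)$ variables, which has finite $\pi\times\Leb$ measure since $\pi(V)<\infty$. On $A^c$ we have $\sup_{t\in[0,1]}f(x,t+v)=e^{-xv}$ for $v\ge 0$ and $=0$ for $v<-1$. So we set $g(x,v)=e^{-xv}\bone(v\ge0)$, and \eqref{eq:f-ass-bound} holds. For \eqref{eq:f-g-ass}, for fixed $x$ and $z>1$,
\[
\int_0^\infty (e^{-xv}z\wedge 1)\,\dd v=\frac{\log z+1}{x}.
\]
The triple integral is therefore at most $\int x^{-1}\pi(\dd x)\int_{(1,\infty)}(1+\log z)\lambda(\dd z)$, which is finite by \eqref{eq:supOU-cond}.

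\emph{Condition \eqref{eq:f-int-ass1_mma}.} For $t\ge1$ the left side is at most $4\int\!\!\int f^2=2\int x^{-1}\pi(\dd x)$, which is bounded by a constant times $t^{1/2+\eta/2}$. For $t<1$, split the $v$-integration.
\begin{itemize}
\item On $v\in[-t,0)$ the integrand is at most $1$, contributing $\le t\,\pi(V)$.
\item On $v\ge0$ the integrand is $e^{-2xv}(1-e^{-xt})^2$, whose $v$-integral is $(1-e^{-xt})^2/(2x)\le (xt\wedge1)^2/(2x)\le t/2$.
\end{itemize}
So the left side is $\le C t\le Ct^{1/2+\eta/2}$.

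\emph{Condition \eqref{eq:f-int-ass2_mma}.} This is the step needing the most care, because $\pi$ may have unbounded support and the rate must be superlinear. Fix $0\le s<t$; we may again assume $t<1$, the case $t\ge 1$ being covered by the finite bound $\int x^{-1}\pi$. Split the $v$-integration.
\begin{itemize}
\item For $v<-s$ we have $f(x,s+v)=f(x,v)=0$, so the minimum vanishes.
\item For $v\in[-s,0)$ the first term is $f(x,t+v)^2\,(1-e^{-x(t-s)})^2\le (xt\wedge1)^2$, and the second term is $f(x,s+v)^2\le 1$. Since the interval has length $s\le t$, this region contributes at most $t\int (xt)^{\eta}\pi(\dd x)=t^{1+\eta}\int x^\eta\pi(\dd x)$.
\item For $v\ge0$, bound the minimum by the geometric mean:
\[
e^{-2xv}\,e^{-xs}(1-e^{-x(t-s)})(1-e^{-xs})\le e^{-2xv}(xt\wedge1)^2 .
\]
Integrating in $v$ gives at most $(xt\wedge 1)^2/(2x)\le (xt)^{1+\eta}/(2x)=t^{1+\eta}x^{\eta}/2$.
\end{itemize}
Both contributions are at most $Ct^{1+\eta}$ by \eqref{eq:supOU-int-ass} and finiteness of $\pi$. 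Since $f(x,\cdot)$ is c\`adl\`ag, all hypotheses of Theorem~\ref{thm:cadlag} hold with $\alpha=2$ and $\varepsilon$ replaced by $\eta$, and the corollary follows.
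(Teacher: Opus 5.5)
Your proposal is correct and follows essentially the paper's route: Theorem~\ref{thm:cadlag} with $\alpha=2$, a dominating $g$, and direct computation of the squared-increment integrals via $1-e^{-y}\le y\wedge 1$. The only differences are cosmetic: you put the strip $v\in[-1,0)$ into the finite-measure set $A$ where the paper takes $A=\emptyset$ and $g=1$ there (both rely on $\pi$ being finite). There is also a harmless slip: on $v\in[-s,0)$ the first term equals $f(x,s+v)^2(1-e^{-x(t-s)})^2$, not $f(x,t+v)^2(1-e^{-x(t-s)})^2$, but your bound $(xt\wedge 1)^2$ holds for the correct expression too.
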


\begin{proof}
First, we show that conditions \eqref{eq:f-ass-bound} and \eqref{eq:f-g-ass} are satisfied with $A = \emptyset$ and
\[
g(x,s) =
\begin{cases}
	e^{-xs}, & s \geq 0, \\
	1, & s \in [-1,0], \\
	0, & s < -1.
\end{cases}
\]
Indeed, simple calculations shows that \eqref{eq:f-ass-bound} holds, and
\[
\begin{split}
& \trint_{(0,\infty) \times \R \times (1,\infty)} 
( g(x,s) z \wedge 1) \, \pi(\dd x) \dd s \lambda(\dd z) \\
& = \pi((0,\infty)) \overline \lambda(1) + 
\int_{(0,\infty)} x^{-1} \pi(\dd x) \int_{(1,\infty)} ( 1 + \log z) \lambda(\dd z)
< \infty,
\end{split}
\]
proving \eqref{eq:f-g-ass}. Here, and later on 
$\overline \lambda(r) = \lambda((r,\infty))$.

Next, we show that \eqref{eq:f-int-ass1_mma} and \eqref{eq:f-int-ass2_mma}
are satisfied with $\alpha = 2$. We have for $u \leq s \leq t$
	\begin{equation} \label{eq:supOU-aux1}
	f(x,t-u) -f(x,s-u) = e^{x(u-s)} ( e^{-x(t-s)} -1). 
	\end{equation}
	Therefore, for $\varepsilon \in (0,1]$
	\begin{equation} \label{eq:supOU-int-bound}
		\begin{split}
			& \dint_{(0,\infty) \times (-\infty, 0]} 
			(f(x,t-u) -f(x,-u))^2 \pi(\dd x) \dd u \\
			& = 
			\int_{(0,\infty)} \frac{1}{2x} ( 1 - e^{-x t} )^2 \pi(\dd x) \\
			& \leq \int_{(0,\infty)} \frac{1}{2x} \left( (xt)^{1 + \varepsilon}
			\bone(xt \leq 1) + \bone(xt > 1) \right) \pi(\dd x) \\
			& \leq t^{1+\varepsilon} \frac{1}{2} \int_{(0,\infty)} x^\varepsilon \pi(\dd x),
		\end{split}
	\end{equation}
	and similarly, 
	\[
	\dint_{(0,\infty) \times (0,t]} f(x,t-u)^2 \, \pi(\dd x) \dd u 
	\leq t \pi((0,\infty)).
	\]
	Thus, \eqref{eq:f-int-ass1_mma} holds. To see \eqref{eq:f-int-ass2_mma}, note 
	that by \eqref{eq:supOU-int-bound}
\[
\begin{split}
\dint_{(0,\infty) \times (-\infty, 0]} &
\left[ 
( f(x,t-u) - f(x, s-u))^2  \right. \\
& \qquad \left. \wedge (f(x,s-u) - f(x, -u))^2 \right] \pi(\dd x) \dd u 
\leq c t^{1+\varepsilon}. 
\end{split}
\]
Furthermore, $f(x,s-u) - f(x,-u) = 0$ for $u > s$, so it remains to 
handle the integral on $(0,s]$.
Using again \eqref{eq:supOU-aux1}, a similar argument as in 
\eqref{eq:supOU-int-bound} gives
\[
\begin{split}
	& \dint_{(0,\infty) \times (0,s]} 
	(f(x,t-u) -f(x,s-u))^2 \, \pi(\dd x) \dd u \\
	& = \int_{(0,\infty)} (1-e^{-x(t-s)})^2   \int_0^s e^{-2xu} 
	\, \dd u \pi(\dd x) \\
	& \leq 
	\int_{(0,\infty)} \frac{1}{2x} ( 1 - e^{-2 x t} )^3 \pi(\dd x) 
	\leq t^{1+\varepsilon} 2^{\varepsilon} \int_{(0,\infty)} x^\varepsilon \pi(\dd x).
\end{split}
\]
	Therefore, \eqref{eq:f-int-ass2_mma} holds and Theorem \ref{thm:cadlag} applies.
\end{proof}

Observe that \eqref{eq:supOU-int-ass} along with the condition 
$\int_{(0,\infty)} x^{-1} \pi(\dd x) < \infty$ imply that $\pi$ is a finite measure. In contrast,  Proposition 1 in \cite{GK} shows that if 
$\pi((0,\infty)) = \infty$ and the jump measure $\lambda$ has unbounded 
support, then the supremum of the supOU process is unbounded 
on any nonempty interval. In particular, the process 
cannot have a c\`adl\`ag modification. Therefore, the conditions in 
Corollary \ref{corr:supOU} are close to optimal.

\subsection{Well-balanced  supOU}

Inspired by the ideas in \cite{Barn, SchnurrWoerner2011}, 
in this example, we consider the kernel given by $f(x,s) = e^{-x|s|}$, with 
$V= ( 0,\infty)$ and $s \in \R$. When $x$ is a fixed parameter, i.e.,~$V$ 
consists of a single element, the moving average obtained by this function is known as the \textit{well-balanced} (L\'evy driven) OU process, and
it was introduced in \cite{SchnurrWoerner2011}. Motivated by this, in what follows, we will refer to the MMA associated to this kernel as a \textit{well-balanced  supOU}. As with the supOU process, a well-balanced supOU is well-defined if and only if \eqref{eq:supOU-cond} holds, 
which will be assumed throughout this example. 

Unlike the standard (L\'evy driven) OU process, the well-balanced OU is always continuous. It turns out that this property is preserved after superposition. 

\begin{corollary}\label{corr:wellbalsupOU}
If the assumptions of Corollary \ref{corr:supOU} hold, then the well-balanced
supOU process
\[
X(t) = \int_{(0,\infty) \times \R} e^{-x|t-s|} \Lambda(\dd x, \dd s)
\]
has a continuous modification.
\end{corollary}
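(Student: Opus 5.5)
We apply Theorem \ref{thm:cont} with $V=(0,\infty)$ and $f(x,s)=e^{-x|s|}$. This kernel is continuous in $s$ for each fixed $x$. We take $\alpha=2$ (this needs $\int_{(0,1]}z^2\lambda(\dd z)<\infty$, which holds because $\lambda$ is a L\'evy measure) and $A=\emptyset$.

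\textbf{Domination.} We need a function $g$ with $\sup_{t\in[0,1]} e^{-x|t-u|}\le g(x,-u)$. Writing $s=-u$, we take
\[
g(x,s)=\begin{cases} e^{-xs}, & s\ge 0,\\ 1, & s\in[-1,0],\\ e^{-x(|s|-1)}, & s<-1.\end{cases}
\]
Each case is checked directly. For $s\ge0$ and $t\ge0$ we have $|t+s|\ge s$. For $s\in[-1,0]$ the bound $1$ is trivial. For $s<-1$ we have $|t+s|\ge |s|-1$ when $t\le 1$.

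Condition \eqref{eq:f-g-ass} then splits into three pieces. The pieces $s\ge0$ and $s<-1$ are the same computation as in the proof of Corollary \ref{corr:supOU}, each bounded by $\int x^{-1}\pi(\dd x)\int_{(1,\infty)}(1+\log z)\lambda(\dd z)$. The middle piece is bounded by $\pi((0,\infty))\overline\lambda(1)$. We already observed that \eqref{eq:supOU-int-ass} together with \eqref{eq:supOU-cond} force $\pi$ to be finite, so all three pieces are finite.

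\textbf{Increment bound \eqref{eq:cont-ass}.} This is the main step. By stationarity in $u$, the left-hand side equals $\int\!\!\int (e^{-x|h-u|}-e^{-x|u|})^2\,\pi(\dd x)\dd u$ with $h=t-s\le t$. It therefore suffices to prove the bound $c\,h^{1+\varepsilon}$ with $\varepsilon\in(0,1]$, after shrinking $\varepsilon$ if necessary. We split the $u$-integral into three regions.
\begin{itemize}
\item On $u\le 0$ the difference is $e^{xu}(e^{-xh}-1)$, which gives $\frac{1}{2x}(1-e^{-xh})^2$.
\item On $u\ge h$ the difference is $e^{-x(u-h)}(1-e^{-xh})$, which gives the same value $\frac{1}{2x}(1-e^{-xh})^2$.
\item On $0<u<h$ the difference is $|e^{-x(h-u)}-e^{-xu}|\le 1-e^{-xh}\le (xh)\wedge 1$, so the contribution is at most $h\,((xh)\wedge1)^2$.
\end{itemize}
The first two regions are handled exactly as in \eqref{eq:supOU-int-bound}: each is at most $\frac12 h^{1+\varepsilon}\int x^\varepsilon\pi(\dd x)$. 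This integral is finite, since $\int_{(1,\infty)} x^\varepsilon\pi(\dd x)<\infty$ by \eqref{eq:supOU-int-ass}, and $\int_{(0,1]}x^\varepsilon\pi(\dd x)\le\pi((0,1])<\infty$.

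For the middle region we use $((xh)\wedge1)^2\le (xh)^{\varepsilon}$, valid for $\varepsilon\le 2$. This gives the bound $h^{1+\varepsilon}\int x^\varepsilon\pi(\dd x)$.

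Summing the three regions yields \eqref{eq:cont-ass} with $C=2\int x^\varepsilon\pi(\dd x)$. The only subtlety is that the middle interval produces the factor $h$ times the squared jump-size term rather than a $1/x$ factor. The finiteness of $\pi$ near $0$ and \eqref{eq:supOU-int-ass} near $\infty$ handle it. Theorem \ref{thm:cont} then gives a continuous modification.
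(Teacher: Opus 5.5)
Your proof is correct and follows the paper's route: Theorem~\ref{thm:cont} with $\alpha=2$, $A=\emptyset$, and the same dominating function $g$ (your $e^{-x(|s|-1)}$ equals the paper's $e^{x(s+1)}$ for $s<-1$). The only difference is cosmetic. You bound the $u$-integral separately on $u\le 0$, $0<u<h$ and $u\ge h$, whereas the paper evaluates it in closed form as $\frac{2}{x}(1-e^{-xt}(1+xt))$ and then bounds that; your version also makes explicit the reduction to $s=0$ by translation invariance and the use of $\pi((0,\infty))<\infty$, both of which the paper leaves implicit.
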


\begin{proof}
We show that the conditions of Theorem \ref{thm:cont} are satisfied. Conditions 
\eqref{eq:f-ass-bound} and \eqref{eq:f-g-ass} are satisfied with $A=\emptyset$ and
\[
g(x,s) = 
\begin{cases}
	e^{-xs}, & s \geq 0, \\
	1, & s \in [-1,0], \\
	e^{x(s+1)}, &  s < -1.
\end{cases}
\]
Next, a straightforward calculation gives that 
\[
\int_{\R} \left( e^{-x|t-u|} - e^{-x|u|} \right)^2 \dd u 
= \frac{2}{x} \left( 1- e^{-xt} ( 1 + xt) \right).
\]
Therefore, assuming \eqref{eq:supOU-int-ass},
\[
\begin{split}
\dint_{V \times \R} (f(x,t-u) - f(x,-u))^2 \pi(\dd x) \dd u 
\leq c t^{1 + \varepsilon} \int_{(0,\infty)} x^{\varepsilon} \pi(\dd x),
\end{split}
\]
which shows that \eqref{eq:cont-ass}  holds with $\alpha = 2$.
\end{proof}

In view of
\[
X(t) = \dint_{(0,\infty) \times (-\infty, t]} e^{-x(t-s)} \Lambda(\dd x, \dd s)+\dint_{(0,\infty) \times (t, \infty)} e^{-x(s-t)} \Lambda(\dd x, \dd s),
\]
it follows by the same argument as for the supOU that if
$\pi((0,\infty)) = \infty$ and 
the jump measure $\lambda$ has unbounded support, then 
$X$ does not admit a c\`adl\`ag modification. Hence, the conditions in the preceding corollary are once again close to optimal.

Let us note that the classical well-balanced OU has not only continuous 
paths but, in fact, absolutely continuous paths, as shown in
\cite{SchnurrWoerner2011}. The sufficient conditions for absolute continuity of MMA processes are given in Theorem 3.1 in \cite{BasseRos2013b}. In our setting, these conditions reduce to the kernel function $f$ being absolutely continuous and its derivative $\dot{f}(x,s) = \frac{\partial}{\partial s} f(x,s)$ satisfying
\begin{equation}\label{eq:bor-cond}
	\trint_{V\times\R\times (0,\infty)} 
	\left( \vert z \dot{f}(x,s)\vert^2 \wedge \vert z \dot{f}(x,s)\vert \right) \pi(\dd x) \dd s \lambda(\dd z) <\infty.
\end{equation}
Note that this implies $\int_{(1,\infty)} z \lambda(\dd z)<\infty$. However, by decomposing $X=X_1+X_2$ as in \eqref{eq:X12-def}, we can replace \eqref{eq:bor-cond} with the following
\begin{align}
	\trint_{V\times\R\times (0,1]} 
	\left( \vert z \dot{f}(x,s)\vert^2 \wedge \vert z \dot{f}(x,s)\vert \right) \pi(\dd x) \dd s \lambda(\dd z) <\infty,\label{eq:ac-suff1}\\
	\trint_{V \times \R \times (1,\infty)} 1\wedge    \left(z\int_{a}^{b} \rvert\dot{f}(x,u-s) \rvert\dd u\right)\pi(\dd x) \dd s\lambda(\dd z)<\infty.\label{eq:ac-suff2}
\end{align}
Indeed, \eqref{eq:ac-suff1} is equivalent to \eqref{eq:bor-cond} for $X_1$. For $X_2$, we have by the deterministic Fubini theorem that
\begin{align*}
	X_2(b)-X_2(a) &= \int_a^b\trint_{V\times\R\times(1,\infty)} 
	z\dot{f}(x,u-s)  \mu(\dd x, \dd s, \dd z)\dd u \\
	&= \trint_{V\times\R\times(1,\infty)} \left( \int_a^b
	z\dot{f}(x,u-s)\dd u \right)  \mu(\dd x, \dd s, \dd z),
\end{align*}
whenever the latter integral exists a.s. The necessary and sufficient condition for this is \eqref{eq:ac-suff2}, implying the absolute continuity of $X_2$.

\begin{corollary} \label{corr:well-bal-supOU}
	Let $\lambda$ be a L\'evy measure on $(0,\infty)$, 
and $\pi$ be a measure on $(0,\infty)$ such that \eqref{eq:supOU-cond} holds. If 
	\begin{equation} \label{eq:abs_con_welsupou}
		\dint_{(0,\infty) \times (0,1]} z ( 1 \wedge zx) \pi(\dd x) \lambda(\dd z) < \infty,
	\end{equation}
	then the well-balanced supOU process $X$ has absolutely continuous paths.
\end{corollary}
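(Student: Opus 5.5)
We verify the two sufficient conditions \eqref{eq:ac-suff1} and \eqref{eq:ac-suff2} stated just before the corollary. Together with the decomposition $X=X_1+X_2$ and \cite[Theorem 3.1]{BasseRos2013b} applied to $X_1$, they give absolute continuity of the paths. The kernel $f(x,s)=e^{-x|s|}$ is absolutely continuous in $s$, with $\dot f(x,s) = -x\,\mathrm{sgn}(s) e^{-x|s|}$ for $s\neq 0$. Hence $|\dot f(x,s)| = x e^{-x|s|}$, and all computations reduce to explicit one–dimensional integrals in $s$.

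\emph{Small jumps, condition \eqref{eq:ac-suff1}.} For fixed $x$ and $z\in(0,1]$ we compute
\[
\int_\R \bigl( z^2x^2e^{-2x|s|} \wedge zxe^{-x|s|}\bigr)\,\dd s .
\]
The minimum is the quadratic term exactly when $zxe^{-x|s|}\le 1$, that is, when $|s|\ge x^{-1}\log(zx)$ (and this holds for all $s$ if $zx\le 1$). If $zx\le 1$, the integral equals $z^2x^2\cdot x^{-1} = z^2 x = z\cdot zx$. If $zx>1$, splitting at $|s|=x^{-1}\log(zx)$ gives a linear part
\[
2zx\int_0^{x^{-1}\log(zx)} e^{-xs}\,\dd s = 2z\bigl(1-(zx)^{-1}\bigr)\le 2z,
\]
and a quadratic tail $z^2x\,e^{-2\log(zx)} = z^2x/(zx)^2 \le z$. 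So in all cases the integral is bounded by $c\, z(1\wedge zx)$, and \eqref{eq:ac-suff1} follows from \eqref{eq:abs_con_welsupou} after integrating against $\pi(\dd x)\lambda(\dd z)$.

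\emph{Large jumps, condition \eqref{eq:ac-suff2}.} Fix $[a,b]$, and by stationarity take $a=0$, $b=T$. Put $h(x,s)=\int_0^T |\dot f(x,u-s)|\,\dd u$; this is the total variation of $u\mapsto e^{-x|u-s|}$ on $[0,T]$. For $s\notin[0,T]$ the function is monotone, so $h(x,s) = |e^{-x|T-s|}-e^{-x|s|}|$. This is at most $e^{-x\,\mathrm{dist}(s,[0,T])}$, and also at most $xT e^{-x\,\mathrm{dist}(s,[0,T])}$. For $s\in[0,T]$ we have $h\le 2$. Thus $h(x,s)\le g_T(x,s)$, where $g_T$ is of the same type as the function $g$ in the proof of Corollary \ref{corr:wellbalsupOU}: it equals $2$ on an interval of length $T$ and decays like $e^{-x\,\mathrm{dist}}$ outside. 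The integral $\trint (1\wedge z g_T)\,\pi(\dd x)\dd s\lambda(\dd z)$ over $z>1$ is then controlled exactly as in the verification of \eqref{eq:f-g-ass} for the supOU and well-balanced cases. It is bounded by a constant times
\[
T\pi((0,\infty))\overline\lambda(1) + \int x^{-1}\pi(\dd x)\int_{(1,\infty)}(1+\log z)\lambda(\dd z).
\]
This is finite by \eqref{eq:supOU-cond}, provided $\pi$ is finite.

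\emph{Main obstacle.} The main obstacle is this finiteness of $\pi$, because here, unlike in Corollary \ref{corr:supOU}, we do not assume \eqref{eq:supOU-int-ass}. I would handle the set $s\in[0,T]$ by using the sharper bound $h(x,s)\le 2\wedge 2xT$. For $x\le 1$, $\pi$ restricted to $(0,1]$ is finite because $\int x^{-1}\pi(\dd x)<\infty$. For $x>1$, note that $(1+\log z)$ integrability together with $\lambda((1,\infty))<\infty$ gives $\int_{(1,\infty)}(1\wedge 2z)\lambda(\dd z)<\infty$, but the factor $\pi((1,\infty))$ still appears. To cover it I would try to exploit \eqref{eq:abs_con_welsupou}: restricted to $z\in(0,1]$ and large $x$, it forces $\pi((1,\infty))\int_{(0,1]}z^2\lambda(\dd z)$-type finiteness, since $1\wedge zx\ge z$ for $x\ge1$. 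This yields $\pi((1,\infty))<\infty$ whenever $\lambda\neq 0$ on $(0,1]$. If $\lambda$ vanishes on $(0,1]$, I would argue directly: the driving basis is then compound-Poisson-like, and the jump points of $\mu$ in the relevant region must be shown to be a.s. finite. I would settle this case by a separate direct argument.
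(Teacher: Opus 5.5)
\textbf{Verdict: genuine gap, in a degenerate case the statement itself does not cover.}

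Your verification of \eqref{eq:ac-suff1} and your treatment of the large jumps for $s\notin[a,b]$ are the paper's argument: its ``short calculation'' and its terms $I_1$, $I_3$. Your computations there are correct. For $s\in[a,b]$ the paper just writes $I_2\le (b-a)\pi((0,\infty))\lambda((1,\infty))<\infty$. That step tacitly uses $\pi((0,\infty))<\infty$, which is not among the hypotheses. Your observations close this whenever $\lambda((0,1])>0$: $\pi((0,1])\le\int_{(0,1]}x^{-1}\pi(\dd x)$, and $\pi((1,\infty))\int_{(0,1]}z^2\lambda(\dd z)$ is at most the left side of \eqref{eq:abs_con_welsupou}. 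On this point you are more careful than the paper.

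\textbf{The gap.} It is the remaining case $\lambda((0,1])=0$. Your promised ``separate direct argument'' cannot exist there, because the conclusion is false.

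\emph{Why your plan fails.} Suppose $\lambda((0,1])=0\neq\lambda$ and $\pi((0,\infty))=\infty$.
\begin{itemize}
\item The atoms of $\mu$ with $\tau\in[a,b]$ and $\zeta>1$ have intensity $(b-a)\pi((0,\infty))\overline\lambda(1)=\infty$. So they are a.s.\ infinitely many, not finitely many.
\item \eqref{eq:ac-suff2} also fails. For $s\in(a,b)$ one has $h(x,s)=2-e^{-x(s-a)}-e^{-x(b-s)}\to 2$ as $x\to\infty$. Since $\pi((0,M])\le M\int x^{-1}\pi(\dd x)<\infty$, we have $\pi((M,\infty))=\infty$ for every $M$.
\end{itemize}

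\emph{Counterexample.} Take $\pi(\dd x)=x^{-1/2}\bone(x>1)\,\dd x$ and $\lambda(\dd z)=z^{-2}\bone(z>1)\,\dd z$.
\begin{itemize}
\item \eqref{eq:supOU-cond} holds, since $\int x^{-1}\pi(\dd x)=2$ and $\int_{(1,\infty)}\log z\,\lambda(\dd z)=1$.
\item The left side of \eqref{eq:abs_con_welsupou} is $0$.
\item Yet $\pi$ is infinite and $\lambda$ has unbounded support. For every $r,h>0$ there are a.s.\ infinitely many atoms with $\tau_i\in(0,h)$ and $\zeta_i>r$.
\item Up to an additive finite constant, $X(t)=\sum_j\zeta_j e^{-\xi_j|t-\tau_j|}$ with nonnegative continuous summands. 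Hence $X$ is a.s.\ unbounded on every dense countable subset of every interval.
\end{itemize}
This is exactly the paper's own remark after Corollary~\ref{corr:wellbalsupOU}. So $X$ has no c\`adl\`ag modification, let alone an absolutely continuous one.

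\textbf{What to do instead.} The statement needs the extra hypothesis $\pi((0,\infty))<\infty$. Equivalently, exclude the case $\lambda((0,1])=0\neq\lambda$; when $\lambda=0$, $X$ is constant. This is the assumption the paper's proof uses silently. With it, your argument is complete. You should flag this missing hypothesis rather than search for a proof of the leftover case.
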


\begin{proof}
	Note that $f(x,\cdot)$ is absolutely continuous with derivative 
	\[
		\dot{f}(x,s)= - \mathrm{sgn}(s) xe^{-x\rvert s\rvert},\ s\neq0.
	\]
	A short calculation shows that condition \eqref{eq:ac-suff1} is equivalent to \eqref{eq:abs_con_welsupou}. To show \eqref{eq:ac-suff2}, we decompose the integral in \eqref{eq:ac-suff2} as $I_1+I_2+I_3$, where 
	\begin{align*}
		I_1= & \int_{(0,\infty)}\int_{(1,\infty)}
		\int_{-\infty}^{a} 1\wedge\left(z(e^{-x(a-s)}-e^{-x(b-s)})\right) 
		\dd s\lambda(\dd z)\pi(\dd x),\\
		I_2= & \int_{(0,\infty)}\int_{(1,\infty)}
		\int_{a}^{b}1\wedge\left(zx\int_{a}^{b}e^{-x\rvert u-s\rvert} 
		\dd u\right)\dd s\lambda(\dd z)\pi(\dd x),\\
		I_3=& \int_{(0,\infty)}\int_{(1,\infty)} 
		\int_{b}^{\infty}1\wedge\left(z(e^{-x(s-b)}-e^{-x(s-a)})\right) 
		\dd s\lambda(\dd z)\pi(\dd x).
	\end{align*}
	Clearly $I_{2}\leq(b-a)\pi((0,\infty))\lambda((1,\infty))<\infty$, and 
	\begin{align*}
		I_{1} &\leq \int_{0}^{\infty}\int_{1}^{\infty}\int_{0}^{\infty}1\wedge  ze^{-xs} \dd s\lambda(\dd z)\pi(\dd x)\\
		&=\left(\lambda((1,\infty))+\int_{(1,\infty)}\log(z)\lambda(\dd z)\right)\int_{(0,\infty)}\frac{\pi(\dd x)}{x}<\infty.
	\end{align*}
	The same computation shows that $I_3<\infty$. 
\end{proof}

Note that if $\int_{(0,1]} z \lambda(\dd z) < \infty$, then 
\eqref{eq:abs_con_welsupou} is satisfied whenever the process exists.
In particular, in this case the process always has
absolutely continuous paths. On the other hand, if 
$\int_{(0,1]} z \lambda(\dd z) = \infty$, then the conditions 
of Corollary \ref{corr:wellbalsupOU} for continuity are, in general, weaker than \eqref{eq:abs_con_welsupou}. In fact, under some additional 
assumptions on the L\'evy measure $\lambda$, Theorem 3.3 in 
\cite{BasseRos2013b} states that \eqref{eq:bor-cond} is 
also necessary for absolutely continuous paths.
For example, if $\lambda(\dd z)=z^{-\alpha-1}\dd z$, 
for $1<\alpha<2$, then 
$\int_{(0,1]} z\lambda(\dd z)=\infty$ and 
\[
\int_{(u,\infty)} z\lambda(\dd z)= \frac{u^{1-\alpha}}{\alpha-1}\ 
\text{ and }\ 
\int_{(0,u]}z^{2}\lambda(\dd z)=\frac{u^{2-\alpha}}{2-\alpha}.
\]
Therefore, condition (3.7) of Theorem 3.3 in \cite{BasseRos2013b} holds.
Thus, $X$ has absolutely continuous paths if and only if  
\eqref{eq:bor-cond} holds. 
A short calculation gives that \eqref{eq:bor-cond} is equivalent to  
$\int_{(1, \infty)} x^{\alpha-1}\pi(\dd x) < \infty$. 
This reveals that there are measures $\pi$ such that 
the assumptions of Corollary \ref{corr:wellbalsupOU} hold, 
but \eqref{eq:bor-cond} do not. That is, unlike the classical 
well-balanced OU, $X$ can be continuous without being absolutely continuous.

\subsection{Trawl processes}

Let $V = [0,\infty)$, $\pi$ be the Lebesgue measure and 
$f(x,s) = \bone(0 \leq x \leq a(s))$, $s \geq 0$,
for a \textit{trawl function} 
$a \colon [0,\infty) \to [0,\infty)$, where 
$\int_0^\infty a(s) \dd s < \infty$. 
The \textit{trawl process} is defined as
\begin{equation} \label{eq:trawl-def}
X(t) = \dint_{[0,\infty) \times (-\infty, t]} \bone ( 0 \leq x \leq a(t-s) )
\Lambda(\dd x, \dd s).
\end{equation}

Trawl processes were introduced by Barndorff-Nielsen \cite{barndorff2011statID}.
Trawls have similar behavior to supOU processes in various aspects.
The marginals and dependence structure can be modeled independently,
and the two classes of processes obey similar type limit theorems, 
see \cite{pakkanen2021,talarczyk2020}.
For properties and applications of trawl processes, 
we refer to Chapter 8 in the monograph \cite{barndorff2018ambit}.
We are not aware of any results on the existence of c\`adl\`ag
modifications of trawl processes.

A function $a$ is $\delta$-H\"older continuous in a neighborhood of 0
for $\delta > 0$, if there exist $C > 0$ and $t_0 > 0$, such that 
for any $s,t \in [0,t_0]$
\[
|a(t) - a(s)| \leq C |t-s|^\delta.
\]
As a corollary of Theorem \ref{thm:cadlag}, we obtain the following.

\begin{corollary}
Let $X$ be a trawl process in \eqref{eq:trawl-def} with non-increasing trawl function $a$ which is $\delta$-H\"older continuous in a neighborhood of 0
for some $\delta > 0$. Then $X$ has a c\`adl\`ag modification.
\end{corollary}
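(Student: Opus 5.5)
We verify the hypotheses of Theorem \ref{thm:cadlag} with $\alpha = 1$; this is allowed because $\lambda$ is a L\'evy measure, so $\int_{(0,1]} z \lambda(\dd z) \le \int_{(0,1]} z\,\lambda(\dd z)$ must still be checked. Since only $\int_{(0,1]} z^2\lambda(\dd z)<\infty$ is guaranteed, we take $\alpha=2$ in the integrability requirement. The kernel is an indicator, so $|f(x,t-u)-f(x,s-u)|^\alpha = |f(x,t-u)-f(x,s-u)|$ for every $\alpha$. All moment conditions therefore reduce to Lebesgue measures of symmetric differences of trawl sets. Write $a(s)=0$ for $s<0$. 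Because $a$ is non-increasing, $f(x,\cdot)=\bone(0\le x\le a(\cdot))\bone(\cdot\ge 0)$ is c\`adl\`ag in the time variable for each $x$: it equals $1$ on an interval starting at $0$ and $0$ afterwards. Only the right-continuity at the right endpoint needs a remark; if $a$ is not right-continuous we first replace it by its right-continuous version, which changes $f$ only on a null set.

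\textbf{Bounding conditions.} Take $A=\emptyset$ and $g(x,-u)=\bone(0\le x\le a(0\vee(-u)))\bone(-u\ge -1)$. Monotonicity of $a$ gives \eqref{eq:f-ass-bound}, since for $t\in[0,1]$ we have $t-u\ge -u$ and $a(t-u)\le a(0\vee(-u))$. Then
\[
\dint g \,\dd x\,\dd u = a(0)+\int_0^\infty a(s)\dd s<\infty,
\]
so \eqref{eq:f-g-ass} follows from $(gz\wedge1)\le g$ and $\overline\lambda(1)<\infty$.

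\textbf{Condition \eqref{eq:f-int-ass1_mma}.} The integral equals
\[
\int_0^\infty \bigl(a(v)-a(v+t)\bigr)\dd v+\int_0^t a(v)\dd v = 2\int_0^t a(v)\dd v\le 2a(0)\,t .
\]
For $t\le 1$ this is bounded by $C t^{1/2+\varepsilon/2}$ with $\varepsilon=1$, and for $t>1$ it is bounded by $2\int_0^\infty a$. Theorem \ref{thm:cadlag} in fact only needs small $t$, since Billingsley's criterion is local. If the large-$t$ case is required, we adjust $C$, using that the left side stays bounded while the right side grows.

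\textbf{Condition \eqref{eq:f-int-ass2_mma}: the main step.} The minimum of two indicators is the indicator of the intersection $\{f(t-u)\ne f(s-u)\}\cap\{f(s-u)\ne f(-u)\}$. Since $f(x,\cdot)$ is an indicator of a time interval $[0,\tau_x]$, where $\tau_x$ is the generalized inverse of $a$ at level $x$, both changes happen only if the interval starts in $(-u, s-u]$ and ends in $(s-u, t-u]$, or the reverse. Concretely, $-u<0\le s-u$, i.e. $u\in(0,s]$, and $a(t-u)<x\le a(s-u)$. The measure of this set is
\[
\int_0^s \bigl(a(s-u)-a(t-u)\bigr)\dd u \le \int_0^s C(t-s)^\delta \dd u \le C t^{1+\delta}
\]
for $t\le t_0$, using H\"older continuity near $0$. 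For $t>t_0$ the left side is bounded by $2\int_0^\infty a$, so enlarging $C$ gives the bound with $\varepsilon=\delta$. Once the symmetric-difference bookkeeping is done correctly, Theorem \ref{thm:cadlag} yields the c\`adl\`ag modification.
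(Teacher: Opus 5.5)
Your argument is the paper's proof. You use $A=\emptyset$, the same $g$, and $\alpha=2$. You get the identity $2\int_0^t a(v)\,\dd v$ for \eqref{eq:f-int-ass1_mma}, and you reduce the minimum in \eqref{eq:f-int-ass2_mma} to the indicator of $\{0<u\le s,\ a(t-u)<x\le a(s-u)\}$. You finish with the H\"older bound for $t\le t_0$ and a crude bound for $t>t_0$. These steps are correct. Two small clean-ups:
\begin{itemize}
\item Drop the opening sentence about $\alpha=1$. That choice is not admissible in general, since only $\int_{(0,1]}z^2\lambda(\dd z)<\infty$ is guaranteed.
\item Use a single $\varepsilon$, say $\delta\wedge 1$, in both integral conditions. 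Your bounds allow this because both left-hand sides are bounded in $t$.
\end{itemize}

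The step that fails as written is your check that $f(x,\cdot)$ is c\`adl\`ag. This is a hypothesis of Theorem \ref{thm:cadlag}, and the paper's proof does not verify it either. Replacing $a$ by its right-continuous version is not enough. Because the inequality $x\le a(s)$ is non-strict, $\{s\ge 0: x\le a(s)\}$ is the closed interval $[0,\tau_x]$ whenever $a(\tau_x)=x$. This happens, e.g., for the exponential trawl $a(s)=e^{-s}$ and every $x\in(0,1)$. Then $f(x,\cdot)=\bone_{[0,\tau_x]}$ is not right-continuous at $\tau_x$.

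The fix is to pass to $\tilde f(x,s)=\bone(0\le x<a(s+))\bone(s\ge 0)$.
\begin{itemize}
\item Since $a$ is non-increasing, $\tilde f\le f$, and $\{\tilde f\ne f\}\subset\{(x,s):x=a(s)\}\cup([0,\infty)\times D)$, where $D$ is the countable discontinuity set of $a$. This is a $\dd x\,\dd s$-null set.
\item Hence the corresponding trawl process $\tilde X$ satisfies $\tilde X(t)=X(t)$ a.s.\ for each $t$. Your $g$ still dominates, and all your integrals are unchanged.
\item Since $s\mapsto a(s+)$ is right-continuous and non-increasing, $\{s\ge0: a(s+)>x\}$ has the form $[0,\tau_x)$ with $\tau_x\in[0,\infty]$. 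So $\tilde f(x,\cdot)$ is c\`adl\`ag.
\end{itemize}
Theorem \ref{thm:cadlag} applied to $\tilde f$ then gives a c\`adl\`ag modification of $\tilde X$, which is also one of $X$.
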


\begin{proof}
Since $a$ is non-increasing, \eqref{eq:f-ass-bound} and \eqref{eq:f-g-ass} hold with $A = \emptyset$ and
\[
g(x,s) =
\begin{cases}
f(x,s), & s \geq 0, \\
f(x,0), & s \in [-1, 0], \\
0, & s < - 1.
\end{cases}
\]
Indeed, \eqref{eq:f-ass-bound} is easy to check, and \eqref{eq:f-g-ass} holds
as
\[
\trint_{(0,\infty) \times \R \times (1,\infty)} 
(g(x,s) z \wedge 1) \dd x \dd s \lambda(\dd z)
= a(0) \overline \lambda(1) + \int_0^\infty a(s) \dd s < \infty.
\]

Next, we prove that
\eqref{eq:f-int-ass1_mma} and \eqref{eq:f-int-ass2_mma} hold with 
$\alpha = 2$.
For $u \leq s \leq t$ we have
	\begin{equation*}
		f(x,t-u) -f(x,s-u) = - \bone ( a(t-u) < x \leq a(s-u) ).
	\end{equation*}
	Therefore,
	\[
	\begin{split}
		& \dint_{V \times \R} (f(x,t-u) - f(x,-u))^2 \dd x \dd u \\
		& = 
		\int_0^\infty (a(u) - a(t+u)) \dd u + \int_0^t a(u) \dd u \\
		& = 2 \int_0^t a(u) \dd u \leq 2 a(0) t,
	\end{split}
	\]
	showing that \eqref{eq:f-int-ass1_mma} holds.

	Next, we show \eqref{eq:f-int-ass2_mma}. Note that for $u > s$ the integrand is 0. For $u \in (0,s]$
	\[
	\begin{split}
	& |f(x,t-u) - f(x,s-u) | \wedge |f(x,s-u) - f(x,-u)| \\
	& = 	\bone( \{ (x,u): a(t-u) < x \leq a(s-u) \}) \wedge 
	\bone( \{ (x,u):  x \leq a(s-u) \}) \\
	& = \bone( \{ (x,u): a(t-u) < x \leq a(s-u) \}),
	\end{split}
	\]
	while for $u \leq 0$
	\[
	\begin{split}
	& |f(x,t-u) - f(x,s-u) | \wedge |f(x,s-u) - f(x,-u)| \\
	& = 
	\bone( \{ (x,u): a(t-u) < x \leq a(s-u) \}) \\
    & \quad \wedge 
	\bone( \{ (x,u): a(s-u) <  x \leq a(-u) \}) = 0.
	\end{split}
	\]
    Thus,
	\[
	\begin{split}
	& \dint_{V \times \R} \left( (f(x,t-u) - f(x,s-u))^2 \wedge 
	(f(x,s-u) - f(x,-u))^2 \right) \dd x \dd u \\
	& = \int_0^s (a(s-u) - a(t-u) ) \dd u  .
	\end{split}
	\]

The proof is now completed by noticing that, on the one hand, for $t \leq t_0$ the H\"older continuity implies that 
\[ \int_0^s (a(s-u) - a(t-u) ) \dd u   \leq C  \int_0^s (t-s)^\delta \dd u \leq C t^{1 + \delta},\]
while, on the other, if $t>t_0$, we have
\[ 
\int_0^s (a(s-u) - a(t-u) ) \dd u   \leq \frac{a(0)}{t_0^\delta}t^{1 + \delta},
\]
due to the monotonicity of $a$.
\end{proof}

Next, we obtain some conditions which guarantee 
that a c\`adl\`ag modification cannot exist.
Let $A = \{ ( x,s) : s \leq 0,  0 \leq x \leq a(-s)\}$ and 
$A_t = (0,t) + A$. For $h > 0$ put 
\[
A^{h} = \cup_{t \in [0,h]} A_t.
\]
First, we need a simple lemma. Write $|\cdot|$ for the two-dimensional 
Lebesgue measure.

\begin{lemma}
	If $|A^{h_0}| = \infty$ for some $h_0 > 0$, then $|A^h| = \infty$ for 
	all $h > 0$.
\end{lemma}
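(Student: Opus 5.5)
The plan is a subadditivity argument. Fix $h>0$, and for $0<h'\le h$ compare $A^{h}$ and $A^{h'}$ by translation. Lebesgue measure is translation invariant, so $|A^{h'}+(c,0)| = |A^{h'}|$ for every $c$. The statement then follows from two properties: monotonicity in $h$, and the bound $|A^{2h}| \le 2|A^{h}|$.

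First, monotonicity. If $h \le h'$, then $A^h \subset A^{h'}$, since the union defining $A^{h'}$ runs over a larger index set. Hence $|A^h| \le |A^{h'}|$, and in particular $|A^h| = \infty$ for every $h \ge h_0$.

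Second, the doubling bound. We have
\[
A^{2h} = \bigcup_{t\in[0,h]} A_t \,\cup\, \bigcup_{t\in[h,2h]} A_t = A^{h} \cup \bigl( (h,0) + A^{h} \bigr),
\]
because $A_{h+t} = (h,0) + A_t$ by definition of $A_t = (t,0)+A$. Subadditivity and translation invariance give $|A^{2h}| \le 2|A^h|$. Iterating yields $|A^{2^n h}| \le 2^n |A^h|$.

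To conclude, given $h>0$, pick $n$ with $2^n h \ge h_0$. By monotonicity and the iterated bound,
\[
\infty = |A^{h_0}| \le |A^{2^n h}| \le 2^n |A^h|,
\]
so $|A^h| = \infty$.

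The only step that needs care is measurability of $A^h$, which is an uncountable union. I would handle it by using outer Lebesgue measure throughout. Subadditivity, monotonicity and translation invariance all hold for outer measure, so the argument is unchanged and makes sense whatever $|\cdot|$ means for these sets.

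Alternatively, measurability can be shown directly. If $a$ is non-increasing, $A^h$ is essentially $\{(x,s): s \le h,\ 0\le x\le \sup_{t\in[0,h],\,t\ge s} a(t-s)\}$. This is the region under a monotone-type function, hence measurable.
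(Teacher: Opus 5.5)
Your argument is correct and is essentially the paper's: the paper uses the same decomposition $A^h = A^{h/2} \cup (A^{h/2} + (0,h/2))$ together with monotonicity and translation invariance, only read as halving ($|A^h| = \infty \Rightarrow |A^{h/2}| = \infty$) rather than as your doubling bound $|A^{2h}| \le 2|A^h|$. One small slip: in the paper $A_t = (0,t) + A$ shifts the second (time) coordinate, so your translate should be $(0,h) + A^h$ rather than $(h,0) + A^h$. This changes nothing in the argument, and your outer-measure remark settles measurability, which the paper does not address.
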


\begin{proof}
	Clearly, $A^{h_1} \subset A^{h_2}$ for $h_1 < h_2$, and
	$A^h = A^{h/2} \cup (A^{h/2} + (0,h/2))$. Since the Lebesgue measure is 
	translation invariant, if $|A^h| = \infty$, then $|A^{h/2} | = \infty$, proving 
	the statement.
\end{proof}

Then we have a condition for non-c\`adl\`ag paths.

\begin{lemma}
	Assume that $\lambda$ has unbounded support, and $A$ is such that 
	$|A^h| = \infty$ for all $h > 0$. Then for any $h > 0$ 
	\[
	\sup_{t \in [0,h]} X(t) = \infty \quad a.s.
	\]
	In particular, $X$ does not have a c\`adl\`ag modification. 
\end{lemma}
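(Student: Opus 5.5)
We follow the strategy of Proposition 1 in \cite{GK}: show that on $[0,h]$ the process meets infinitely many arbitrarily large jumps of the driving measure. Since the integrand is an indicator, such a jump shows up at full size in $X(t)$ for some $t\in[0,h]$. Fix $h>0$ and $K>0$, and choose $K' > 2K$ with $\lambda((K',\infty))>0$; this is possible because $\lambda$ has unbounded support. Split $X = Y + Z$, where $Z$ is the part of the integral coming from atoms of $\mu$ with $z > K'$. Then $Y$ is a trawl process driven by the truncated L\'evy basis, and $Z(t)$ is the sum of the big jump sizes $z_i$ over the atoms $(x_i,u_i,z_i)$ with $(x_i,u_i)\in A_t$.

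The key step is a Poisson count. The atoms of $\mu$ with $(x,u)\in A^h$ and $z>K'$ form a Poisson random variable with mean $|A^h|\,\lambda((K',\infty)) = \infty$. Hence there are almost surely infinitely many, in particular at least one, atom $(x_1,u_1,z_1)$ with $(x_1,u_1)\in A_{t_1}$ for some $t_1\in[0,h]$. Since $f\ge 0$ and $z_i>0$, all big-jump contributions are nonnegative, so $Z(t_1)\ge z_1>2K$.

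The remaining difficulty is to control $Y(t_1)$, because $t_1$ is random and $Y$ is not yet known to be bounded on $[0,h]$. I would handle this by independence. $Y$ and the big-jump point process are independent, and $Y(t)$ is stationary, so $\p(Y(t)\ge -K)\to 1$ as $K\to\infty$, uniformly in $t$. Conditionally on the big jumps, $t_1$ is fixed, so
\[
\p\Big(\sup_{[0,h]}X > K\Big)\ge \p\big(Y(t_1)\ge -K\big) \ge \p\big(Y(0)\ge -K\big).
\]
A cleaner route uses the infinitely many atoms $t_1,t_2,\dots$, which can be chosen with disjoint, or at least distinct, $A_{t_i}$. With countably many times, one can exclude $Y(t_i)$ being very negative for all $i$ at once. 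Either way, letting $K\to\infty$ gives $\p(\sup_{[0,h]}X=\infty)=1$ after one more step: the event $\{\sup_{[0,h]} X=\infty\}$ (suitably defined via a countable dense set, or simply using $\limsup_K$) satisfies $\p\ge \lim_K \p(Y(0)\ge-K) = 1$.

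Finally, a c\`adl\`ag function on $[0,h]$ is bounded. A c\`adl\`ag modification $\tilde X$ would agree with $X$ at all rational times and at each of the fixed times $t_i$ above, almost surely. The argument above applies verbatim to $\tilde X$ evaluated at the $t_i$: conditionally on the jumps these are deterministic times, so $\tilde X(t_i)=X(t_i)$ a.s. This contradicts boundedness. The main obstacle is the conditioning and measurability step that makes the random times $t_i$ legitimate; I would resolve it through the independence of $Y$ and the big-jump Poisson process, treating the $t_i$ as deterministic given the big jumps.
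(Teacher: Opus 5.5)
Your core mechanism is the paper's: the atoms of $\mu$ in $A^h\times(K',\infty)$ are Poisson with infinite mean, so one of them lies in some $A_{t_1}$, $t_1\in[0,h]$, and makes $X(t_1)$ large. Your independence/stationarity control of the remainder $Y$ is more careful than the paper's one-line proof. That proof writes $X(t)=\Lambda(A_t)\ge x$, and so tacitly treats the drift and compensated small-jump part of $\Lambda(A_t)$ as nonnegative. One small fix: $Y=Y_{K'}$ changes with $K$. This is harmless, because removing fewer nonnegative jumps only increases it, so $Y_{K'}\ge Y_1$ and $\p(Y_{K'}(0)\ge -K)\ge \p(Y_1(0)\ge -K)\to 1$.

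The genuine gap is in your last paragraph. A modification satisfies $\tilde X(t)=X(t)$ a.s.\ only for each \emph{fixed} $t$. Conditioning on the big jumps does not make the $t_i$ fixed times for $\tilde X$, since $\tilde X$ is not independent of those jumps. Compare $X\equiv 0$ with $\tilde X(t)=\bone(t=U)$, $U$ uniform.

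In fact, $|A^h|=\infty$ alone cannot give the conclusion. Let $a=\bone_{[0,1]}$ except $a(1/n)=n$ for $n\ge 2$. Then $A^h\supseteq[0,n]\times[0,h/2]$ for all $n\ge 2/h$, so $|A^h|=\infty$ for every $h$, and your pathwise $Z$ is unbounded on $[0,h]$. Yet each $A_t$ differs from $[0,1]\times[t-1,t]$ by a Lebesgue-null set, so $X(t)=\Lambda([0,1]\times[t-1,t])$ a.s.\ for each $t$. This process has the c\`adl\`ag modification $L(t)-L(t-1)$, with $L$ a two-sided L\'evy process.

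The repair is the countable-set route you mention only in passing, carried out properly. For c\`adl\`ag $\tilde X$ and a countable dense $D\subset[0,h]$ containing $h$, $\sup_{[0,h]}\tilde X=\sup_D X$ a.s. So the big atoms must be found in $\bigcup_{t\in D}A_t$, i.e.\ you need $|\bigcup_{t\in D}A_t|=\infty$. This holds for non-increasing trawls unbounded at $0$: for $s\in(0,h)$, every $x$ is covered by some $A_t$ with $t\in D$ slightly larger than $s$. Then let $t_1$ be the first element of an enumeration of $D$ whose trawl set contains a big atom. By independence and stationarity, $\p(Y(t_1)\ge -K)=\sum_{t\in D}\p(t_1=t)\,\p(Y(t)\ge -K)=\p(Y(0)\ge -K)$, with no measurability issues at random times. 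The paper's proof does not justify this step either; it simply asserts the \emph{in particular}.
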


\begin{proof}
	Fix $x > 0$. Since $\nu ( A^h \times [x,\infty)) = \infty$,
	with probability 1 there is a point $(\tau, \xi, \zeta) \in A^h \times [x, \infty)$,
which means that for some $t \in [0,h]$, $(\tau, \xi) \in A_t$, thus 
$ X(t) = \Lambda(A_t) \geq x $.
\end{proof}

In case of monotone trawls, if $a$ is not bounded and the jump measure 
$\lambda$ has unbounded support, then there is no c\`adl\`ag modification.

\subsection{Power-weighted supOU processes}

Let $V=(0,\infty)$ and $f(x,s)= x^{\kappa} e^{-xs}\bone(s \geq 0)$ for some $\kappa>0$. We consider the \textit{power-weighted supOU process} defined as
\begin{equation}\label{eq:new:def}
	X(t) = \dint_{(0,\infty) \times (-\infty, t]} x^{\kappa} e^{-x(t-s)} \Lambda(\dd x, \dd s).
\end{equation}

First, we obtain conditions for the existence of such a process. We start with the case when $\int_{(0,1]} z \lambda(\dd z) < \infty$, so that no compensation is needed.

\begin{lemma} \label{lemma:wsupOU-fv-int}
The Poisson integral 
\[
\trint_{(0,\infty)\times (-\infty, t] \times (0,\infty)}
x^\kappa e^{-x(t-s)} z \mu(\dd x, \dd s, \dd z)
\]
exists if and only if 
\begin{equation} \label{eq:wsupOU-iff}
\begin{split}
\dint_{(0,\infty)^2} &
\left[ 
x^{\kappa - 1} z \bone (x^\kappa z \leq 1) \right. \\
& \quad \left. + ( 1 + \log (x^\kappa z) ) x^{-1} \bone(x^\kappa z > 1) \right] 
\lambda(\dd z ) \pi (\dd x) < \infty.
\end{split}
\end{equation}
\end{lemma}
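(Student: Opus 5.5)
The Poisson integral of a nonnegative deterministic function $h$ against $\mu$ exists (is a.s. finite) if and only if $\int (h\wedge 1)\,\dd\nu<\infty$. This is the classical criterion for Poisson integrals of nonnegative integrands (cf.\ \cite[Theorem 2.7]{rajputrosinski1989} with no compensation). Applying it with $h(x,s,z)=x^\kappa e^{-x(t-s)}z$, existence is equivalent to
\[
\trint_{(0,\infty)\times(-\infty,t]\times(0,\infty)} \left( x^\kappa z e^{-x(t-s)} \wedge 1 \right) \pi(\dd x)\,\dd s\,\lambda(\dd z)<\infty.
\]
By stationarity, substituting $r=t-s\ge 0$, this becomes $\dint I(x^\kappa z, x)\,\lambda(\dd z)\pi(\dd x)$, where $I(w,x)=\int_0^\infty (we^{-xr}\wedge 1)\,\dd r$. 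The whole proof then reduces to computing $I$ explicitly.

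To compute $I(w,x)$, substitute $v=xr$, which gives $I(w,x)=x^{-1}\int_0^\infty (we^{-v}\wedge1)\,\dd v$. If $w\le 1$, the minimum is always $we^{-v}$, so $I=w/x=x^{\kappa-1}z$. If $w>1$, the minimum equals $1$ for $v\le\log w$ and $we^{-v}$ afterwards, so $I=x^{-1}(\log w+1)$. Inserting these two cases with $w=x^\kappa z$ gives exactly the integrand in \eqref{eq:wsupOU-iff}. By Tonelli (everything is nonnegative), the finiteness of the triple integral is equivalent to \eqref{eq:wsupOU-iff}.

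The only point needing care is to justify the criterion itself. For sufficiency, split into the jumps with $h\le 1$, where $\E\int h\bone(h\le1)\,\dd\mu<\infty$, and those with $h>1$, which form an a.s.\ finite set since $\nu(h>1)<\infty$. For necessity, use the Laplace functional: $\E e^{-\int h\,\dd\mu}=\exp(-\int(1-e^{-h})\,\dd\nu)$, and this is positive only if $\int (1-e^{-h})\,\dd\nu<\infty$, with $1-e^{-h}\asymp h\wedge1$. I expect no real obstacle beyond this standard fact; the computation of $I$ is elementary.
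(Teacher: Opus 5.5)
Your proof is correct and follows the paper's route. The paper likewise uses the criterion $\int (h\wedge 1)\,\dd\nu<\infty$, citing \cite[Theorem 2.7]{Kyprianou}, which fits more directly than Rajput--Rosi\'nski since the latter concerns integrals with centering. Beyond that, the paper simply asserts the equivalence with \eqref{eq:wsupOU-iff}, so your explicit evaluation $I(w,x)=x^{-1}\bigl(w\bone(w\le 1)+(1+\log w)\bone(w>1)\bigr)$ and your Laplace-functional justification of the criterion fill in details the paper leaves implicit.
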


\begin{proof}
By \cite[Theorem 2.7]{Kyprianou} the integral exists if and only if
\[
\trint_{(0,\infty)^2 \times (0,\infty)} (x^\kappa e^{-xs} z \wedge 1)
\nu(\dd x, \dd s, \dd z) < \infty,
\]
which is equivalent to \eqref{eq:wsupOU-iff}.
\end{proof}

Sufficient conditions for \eqref{eq:wsupOU-iff} can be obtained 
from the behavior of $\lambda$ at 0 and at $\infty$.
Let 
\begin{equation*}
	\beta_0 = \inf \left\{
	\beta' \geq 0 \colon \int_{(0,1]} z^{\beta'} \lambda (\dd z) < \infty
	\right\} \in [0,2],
\end{equation*}
denote the Blumenthal--Getoor index of the L\'evy measure $\lambda$, and 
let 
\begin{equation*}
 \eta_\infty = \sup \left\{ \eta' \geq 0 \colon \int_{(1,\infty)} z^{\eta'} \lambda(\dd z)
 < \infty \right\} \in [0,\infty].
\end{equation*}
In what follows, if $\int_{(0,1]} z^{\beta_0} \lambda (\dd z) < \infty$, we set 
$\beta = \beta_0$. Otherwise, we choose $\beta > \beta_0$ arbitrarily close.
Similarly, $\eta = \eta_\infty$ if 
$\int_{(1,\infty)} z^{\eta_\infty} \lambda(\dd z ) < \infty$, otherwise 
$\eta < \eta_\infty$ arbitrarily close.

\begin{lemma} \label{lemma:wsupOU-fv-suff}
Assume that $\int_{(0,1]} z \lambda(\dd z) < \infty$, 
$\int_{(1,\infty)} \log z  \lambda(\dd z) < \infty$,
and  that
\begin{equation} \label{eq:wsupOU-fv-intcond}
\begin{split}
& \int_{(0,1]} x^{(\eta \wedge 1) \kappa - 1} 
\left( 1 + \bone ( \eta > 0) \log x^{-1} \right) \pi(\dd x)  < \infty 
\quad \text{and } \\ 
& \int_{(1,\infty)} x^{\beta \kappa -1} 
\left( 1 + \bone (\beta = 0) \log x \right) \pi (\dd x) < \infty.
\end{split}
\end{equation}
Then \eqref{eq:wsupOU-iff} holds.
\end{lemma}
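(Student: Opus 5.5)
We have to bound the double integral in \eqref{eq:wsupOU-iff} by splitting the $(x,z)$ quadrant. The natural split is into four regions, according to $x \le 1$ or $x>1$ and $z \le 1$ or $z > 1$. On each region we estimate the integrand using either the first term (when $x^\kappa z \le 1$) or the second (when $x^\kappa z>1$). The guiding principle is to bound $z$-integrals by the moment indices $\beta$ and $\eta$. We use the elementary facts that $z\bone(z\le r)\le z^{b}r^{1-b}$ for $b\le 1$, and that $\bone(z>r)\le (z/r)^{e}$ for $e\ge0$. We also use the logarithmic bound $1+\log y \le c_\delta y^\delta$, or keep the logarithm exactly when the exponent is zero; this is where the $\log$ factors in \eqref{eq:wsupOU-fv-intcond} come from.

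\textbf{Region $x\le1$, $z\le1$.} Here $x^\kappa z\le1$, so the integrand is $x^{\kappa-1}z$. It integrates to $\int_{(0,1]}z\lambda(\dd z)\int_{(0,1]}x^{\kappa-1}\pi(\dd x)$. Since $\eta\wedge 1\le 1$ and $x\le 1$, we have $x^{\kappa-1}\le x^{(\eta\wedge1)\kappa-1}$, and the right side is $\pi$-integrable on $(0,1]$ by \eqref{eq:wsupOU-fv-intcond}.

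\textbf{Region $x\le 1$, $z>1$.} Split according to whether $z\le x^{-\kappa}$ or $z > x^{-\kappa}$.
\begin{itemize}
\item On $\{z\le x^{-\kappa}\}$ set $e=\eta\wedge1$ and use $z=z^{e}z^{1-e}\le z^{e}x^{-\kappa(1-e)}$. This gives $x^{\kappa-1}z\le x^{e\kappa-1}z^{e}$, and $z^e$ is $\lambda$-integrable on $(1,\infty)$.
\item On $\{z>x^{-\kappa}\}$ we have $1+\log(x^\kappa z)\le 1+\log z$, and $\lambda(z>x^{-\kappa})\le x^{\kappa e}\int z^{e}\lambda(\dd z)$. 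Hence this part is at most $x^{-1}\bigl(\int_{z>x^{-\kappa}}(1+\log z)\lambda(\dd z)\bigr)$.
\end{itemize}
The second piece is the main obstacle, because the logarithm has to be traded against the tail decay.

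\emph{Case $\eta>0$.} Write $\log z \le \log x^{-\kappa} + \log(z x^{\kappa})$. Then
\[
\int_{z>x^{-\kappa}}\log z\,\lambda(\dd z)\le \kappa\log x^{-1}\, x^{\kappa e}\!\int z^e\lambda(\dd z)+\int_{z>x^{-\kappa}}(zx^\kappa)^{e}\,c\,\lambda(\dd z).
\]
Here we use $\log y\le c\,y^{e}$ for $y>1$ and $e>0$. This produces $x^{e\kappa-1}(1+\log x^{-1})$, which matches the hypothesis.

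\emph{Case $\eta=0$.} We only have $\int_{(1,\infty)}\log z\,\lambda(\dd z)<\infty$, and we bound the piece crudely by $x^{-1}\int_{(1,\infty)}(1+\log z)\lambda(\dd z)$. This is integrable because $x^{-1}=x^{(\eta\wedge1)\kappa-1}$ when $\eta=0$, which is consistent with the indicator $\bone(\eta>0)$ in \eqref{eq:wsupOU-fv-intcond}.

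\textbf{Region $x>1$, $z>1$.} Here $x^\kappa z>1$, so the integrand is $x^{-1}(1+\log x^\kappa+\log z)$. Note that $\lambda((1,\infty))<\infty$ and $\int_{(1,\infty)}\log z\,\lambda(\dd z)<\infty$. So it suffices to have $\int_{(1,\infty)} x^{-1}(1+\log x)\pi(\dd x)<\infty$. Since $\beta\le1$ under the assumption $\int_{(0,1]} z\,\lambda(\dd z)<\infty$, this follows from the second condition in \eqref{eq:wsupOU-fv-intcond}:
\begin{itemize}
\item if $\beta=0$ it is exactly that condition;
\item if $\beta>0$, then $x^{-1}\log x\le c\,x^{\beta\kappa-1}$ for $x>1$.
\end{itemize}

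\textbf{Region $x>1$, $z\le1$.} Split at $z=x^{-\kappa}$.
\begin{itemize}
\item On $\{z\le x^{-\kappa}\}$, $x^{\kappa-1}z\le x^{\kappa-1}z^{\beta}x^{-\kappa(1-\beta)}=x^{\beta\kappa-1}z^{\beta}$, which is integrable by the choice of $\beta$.
\item On $\{x^{-\kappa}<z\le1\}$, the integrand is $x^{-1}(1+\log(x^\kappa z))$.
\end{itemize}
\emph{Case $\beta>0$.} Use $1+\log y\le c\,y^{\beta}$ for $y>1$ to get $x^{-1}(x^\kappa z)^\beta=x^{\beta\kappa-1}z^\beta$, which is integrable.

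\emph{Case $\beta=0$.} Then $\lambda((0,1])<\infty$, and the bound $x^{-1}(1+\kappa\log x)\lambda((0,1])$ is integrable, matching the logarithmic term in the second condition of \eqref{eq:wsupOU-fv-intcond}.

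In all four regions the integrand is bounded by a product of a $\pi$-integrable function and a $\lambda$-integrable function, so \eqref{eq:wsupOU-iff} holds.
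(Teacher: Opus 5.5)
Your proposal is correct and follows essentially the same route as the paper. Both split according to $x\le 1$ versus $x>1$ and according to whether $x^\kappa z\le 1$, trade powers of $z$ for powers of $x$ using the moment indices $\beta$ and $\eta$, and treat the logarithmic cases $\eta=0$ and $\beta=0$ separately. The differences are cosmetic: you use $e=\eta\wedge 1$ throughout instead of separating $\eta<1$ from $\eta\ge 1$, and you control the $\log z$ tail through the identity $\log z=\kappa\log x^{-1}+\log(x^\kappa z)$ rather than bounding it directly.
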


\begin{proof}
For the first term in \eqref{eq:wsupOU-iff}, for $x \in (1,\infty)$ we have
\[
\begin{split}
& \int_{(1,\infty)} \pi(\dd x) \int_{(0,\infty)} x^{\kappa -1} z 
\bone(x^\kappa z \leq 1) \lambda(\dd z) \\
& \leq 
\int_{(1,\infty)} \pi(\dd x)
\int_{(0,1]} z^\beta \lambda(\dd z) x^{\kappa - 1 - \kappa(1-\beta)} \\
& = \int_{(0,1]} z^\beta \lambda(\dd z) \int_{(1,\infty)} x^{\beta \kappa -1}
\pi(\dd x) < \infty.
\end{split}
\]
Consider now $x \in (0,1]$. If $\eta < 1$, then
\begin{equation} \label{eq:wsupOU-fv-1}
\begin{split}
& \int_{(0,1]} \pi(\dd x) \int_{(0,\infty)} x^{\kappa -1} z 
\bone(x^\kappa z \leq 1) \lambda(\dd z) \\
& = \int_{(0,1]} \pi(\dd x)
\left( \int_{(0,1]} z \lambda(\dd z) 
+ \int_{(1,\infty)} z^\eta x^{-\kappa(1-\eta)} \lambda(\dd z) \right) 
x^{\kappa - 1} \\
& \leq 
\int_{(0,1]} x^{\kappa -1} 
\pi(\dd x) \int_{(0,1]} z \lambda(\dd z) \\
& \qquad + 
\int_{(0,1]} x^{\eta \kappa - 1} \pi(\dd x) 
\int_{(1,\infty)} z^\eta \lambda(\dd z)  < \infty,
\end{split}
\end{equation}
and similarly for $\eta \geq 1$
\begin{equation} \label{eq:wsupOU-fv-2}
\begin{split}
& \int_{(0,1]} \pi(\dd x) \int_{(0,\infty)} x^{\kappa -1} z 
\bone(x^\kappa z \leq 1) \lambda(\dd z)  \\
& \leq \int_{(0,1]} x^{\kappa - 1} \pi(\dd x) 
\int_{(0,\infty)} z \lambda(\dd z).
\end{split}
\end{equation}
For the second term in \eqref{eq:wsupOU-iff}, for $x \in (0,1]$
we obtain
\begin{equation} \label{eq:wsupOU-fv-3}
\begin{split}
& \int_{(0,1]} \pi(\dd x) \int_{(1,\infty)} \lambda(\dd z)
( 1 + \log (x^\kappa z) ) x^{-1} \bone ( x^\kappa z > 1) \\
& \leq \int_{(0,1]} x^{-1} \pi(\dd x) \int_{(1,\infty)} ( 1 + \log z)
\bone ( x^\kappa z > 1) \lambda(\dd z) \\
& \leq 
\begin{cases}
c \int_{(0,1]} 
x^{\eta \kappa - 1} \kappa \log x^{-1} 
\pi(\dd x) \int_{(1,\infty)} z^\eta \lambda(\dd z) < \infty, & \eta > 0, \\
\int_{(0,1]} x^{-1} \pi(\dd x) 
\int_{(1,\infty)} ( 1 + \log z) \lambda(\dd z) < \infty, & \eta = 0,
\end{cases}
\end{split}
\end{equation}
and for $x \in (1,\infty)$
\begin{equation} \label{eq:wsupOU-fv-4}
\begin{split}
& \int_{(1,\infty)} \pi(\dd x) \int_{(0,\infty)} \lambda(\dd z)
\left( 1 + \log (x^\kappa z) \right) x^{-1} \bone(x^\kappa z > 1 ) \\
& \leq \int_{(1,\infty)} x^{-1} \left( 
c \int_{(0,1]} (x^\kappa z)^\beta \lambda(\dd z) \right. \\
& \hspace{2.3cm} \left.
+ \left[ \overline \lambda(1) (1 + \kappa \log x)  + 
\int_{(1,\infty)} \log z \lambda(\dd z) \right] \right) \pi(\dd x)  < \infty, 
\end{split}
\end{equation}
for $\beta > 0$, while for $\beta = 0$ the first term in the 
bracket above changes to $(1 + \log x) \lambda((0,1])$.
\end{proof}

Next, we turn to the compensated integral and obtain sufficient
conditions for its existence.

\begin{lemma} \label{lemma:wsupOU-comp}
The compensated integral
\[
\trint_{(0,\infty) \times (-\infty,t ] \times (0,1]} 
x^{\kappa} e^{-x(t-s)} z (\mu - \nu)(\dd x, \dd s, \dd z)
\]
exists if 
\[
\int_{(0,1]} x^{2\kappa -1} \pi(\dd x) < \infty \quad 
\text{and } \
\int_{(1,\infty)} x^{(\beta \vee 1)\kappa-1} \pi(\dd x)<\infty.
\]
\end{lemma}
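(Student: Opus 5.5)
The plan is to use the standard existence criterion for compensated Poisson integrals. By \cite[Theorem 2.7]{rajputrosinski1989}, or the analogue of \cite[Theorem 2.7]{Kyprianou} for compensated integrals, the integral of a deterministic $h(x,s,z)$ against $\mu-\nu$ exists as soon as
\[
\trint (h^2 \wedge |h|) \, \dd\nu < \infty .
\]
Here $h = x^\kappa e^{-xs} z$ with $s \geq 0$, after shifting $s \mapsto t-s$, and $z \in (0,1]$. We therefore need to show
\[
I := \int_{(0,\infty)} \pi(\dd x) \int_{(0,1]} \lambda(\dd z) \int_0^\infty \left( x^{2\kappa} e^{-2xs} z^2 \wedge x^\kappa e^{-xs} z \right) \dd s < \infty .
\]

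First, for $x \in (0,1]$, we drop the minimum and keep only the square term. Integrating in $s$ gives
\[
\int_0^\infty x^{2\kappa} e^{-2xs} z^2 \, \dd s = \tfrac12 x^{2\kappa-1} z^2 .
\]
Since $\int_{(0,1]} z^2 \lambda(\dd z) < \infty$ for any Lévy measure, this part is bounded by
\[
\tfrac12 \int_{(0,1]} z^2 \lambda(\dd z) \int_{(0,1]} x^{2\kappa-1}\pi(\dd x) < \infty .
\]

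Second, for $x > 1$, we split according to whether $x^\kappa z \leq 1$ or $x^\kappa z > 1$.

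When $x^\kappa z \leq 1$, we use the square term for all $s$. The contribution is $\tfrac12 x^{2\kappa -1} z^2 \bone(z \le x^{-\kappa})$. If $\beta \leq 2$ we bound $z^2 \le z^{\beta} x^{-\kappa(2-\beta)}$, which gives $\tfrac12 x^{\beta\kappa-1} z^\beta$. Integrating over $z$ uses $\int_{(0,1]} z^\beta \lambda(\dd z)<\infty$, by the choice of $\beta$.

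When $x^\kappa z > 1$, the minimum equals the linear term for $s < s_0 := \log(x^\kappa z)/x$ and the square term afterwards. This gives
\[
\int_0^{s_0} x^\kappa e^{-xs} z \, \dd s + \int_{s_0}^\infty x^{2\kappa} e^{-2xs} z^2 \, \dd s
\leq \frac{x^\kappa z}{x} \wedge \left( \frac{\log(x^\kappa z)}{x}\, x^\kappa z \right) + \frac{1}{2x} ,
\]
which is rough. It is better to bound the first integral crudely by $x^{\kappa-1}z$. Then for $\beta \geq 1$ we have $z \le z^\beta x^{\kappa(\beta-1)}$ because $x^\kappa z > 1$, yielding $x^{\beta\kappa-1}z^\beta$. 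For $\beta<1$ we simply use $z \le z^1$ together with $\int_{(0,1]} z\lambda(\dd z)<\infty$, which holds since $\beta<1$ forces the first moment near zero to be finite; this yields $x^{\kappa-1}$. The tail term $\tfrac1{2x}\lambda(\{z: z> x^{-\kappa}\})$ is handled the same way via Markov-type bounds: $\lambda(z>x^{-\kappa}) \le x^{\kappa(\beta\vee1)}\int z^{\beta\vee1}\lambda(\dd z)$. In every case the $x$-integrand is at most $c\,x^{(\beta\vee1)\kappa-1}$, which is integrable on $(1,\infty)$ by assumption.

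The main obstacle is this bookkeeping for $x>1$. One has to make sure the exponent comes out as $(\beta\vee 1)\kappa -1$ in every regime, in particular when $\beta<1$ and the linear term dominates. The plan there is to avoid the logarithm coming from $s_0$ by using only the crude bound $\int_0^\infty x^\kappa e^{-xs} z\,\dd s = x^{\kappa-1}z$ on the region $x^\kappa z>1$, since $\min(a,b)\le b$. If the case $\beta=\beta_0$ is not attained, the arbitrarily-close choice of $\beta$ absorbs the issue.
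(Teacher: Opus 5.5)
Your proposal is correct and is essentially the paper's argument. The paper checks the two Rajput--Rosi\'nski conditions separately: the drift condition, which reduces to $\int z f\,\bone(zf>1)\,\dd\nu$, and the condition $\int (1\wedge |zf|^2)\,\dd\nu$. For nonnegative $h=x^\kappa e^{-xs}z$ these two together are equivalent to your single criterion $\int (h^2\wedge |h|)\,\dd\nu<\infty$. The case analysis then matches yours: the split at $x^\kappa z=1$, the threshold $s_0=x^{-1}\log(x^\kappa z)$, the bound $x^{\kappa-1}z$ handled separately for $\beta\le 1$ and $\beta>1$, and a Markov-type bound on $\lambda((x^{-\kappa},1])$ for the $\tfrac{1}{2x}$ term.
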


\begin{proof}
We check the conditions of \cite[Proposition 34]{barndorff2018ambit}
(see also \cite[Theorem 2.7]{rajputrosinski1989}), which in our setup
read as
\begin{align}
& \dint_{(0,\infty)\times \R} \left| 
\int_{(0,1]} \left( \tau(z f(x, s)) - f(x, s) \tau(z) \right) \lambda(\dd z) \right| \pi(\dd x) \dd s <\infty,\label{eq:rajros1}\\
& \trint_{(0,\infty)\times \R \times (0,1]}  
\left( 1 \wedge |z f(x, s)|^2 \right)  \pi(\dd x) \dd s \lambda(\dd z)< \infty,\label{eq:rajros2}
\end{align}
where $\tau(y) = y \bone_{[0,1]}(| y |)$. 

For \eqref{eq:rajros1} we have
\begin{equation} \label{eq:wsupOU-aux1}
\begin{split}
&\trint_{(0,\infty)^2 \times (0,1]} \left| \tau(z f(x, s)) - f(x, s) \tau(z) \right|  \pi(\dd x) \dd s \lambda(\dd z)\\
&= \trint_{(0,\infty)^2 \times (0,1]} z f(x, s) \bone(z f(x, s)>1) 
\pi(\dd x)  \dd s \lambda(\dd z) \\
&= \int_{(0,\infty)} \int_{(0,1]} 
\int_0^{x^{-1} \log (z x^{\kappa})}
z x^{\kappa} e^{-xs} \bone(z > x^{-\kappa}) \dd s \lambda(\dd z) \pi(\dd x)\\
&=\int_{(0,\infty)} \int_{(0,1]} x^{-1} (1- z^{-1} x^{-\kappa}) z x^{\kappa} \bone(z > x^{-\kappa}) \lambda(\dd z) \pi(\dd x)\\
&\leq  \int_{(1,\infty)} x^{\kappa-1} \int_{(0,1]}  z \bone(z > x^{-\kappa}) \lambda(\dd z) \pi(\dd x) \\
& \leq 
\begin{cases}
\int_{(1,\infty)} x^{\kappa-1} \pi (\dd x) \int_{(0,1]}  z \lambda(\dd z), & \beta \leq 1,\\
\int_{(1,\infty)} x^{\beta \kappa-1} \pi (\dd x) \int_{(0,1]}  z^\beta \lambda(\dd z), & \beta > 1.
\end{cases}
\end{split}
\end{equation}

We now turn to \eqref{eq:rajros2} and split the integral as follows
\begin{align*}
&\trint_{(0,\infty)^2\times (0,1]} 
\left( 1 \wedge |z f(x, s)|^2 \right) \pi(\dd x) \dd s \lambda(\dd z) \\
&=\trint_{(0,\infty)^2\times (0,1]} 
\left[ 
\bone(z x^\kappa e^{-x s} \geq 1)  \right. \\
& \hspace{3.2cm} \left. 
+ z^2 x^{2\kappa} e^{-2x s} \bone(z x^\kappa e^{-x s} < 1) 
\right] \pi(\dd x) \dd s \lambda(\dd z) \\
& =: J_1 + J_2.
\end{align*}
For $J_1$, we get
\begin{align*}
J_1 &= \int_{(1,\infty)} x^{-1} \int_{(0,1]} \log(z x^\kappa) 
\bone(z> x^{-\kappa}) \lambda(\dd z) \pi(\dd x) \\
& \leq  \int_{(1,\infty)} \int_{(0,1]} z x^{\kappa-1} 
\bone(z> x^{-\kappa}) \lambda(\dd z) \pi(\dd x) < \infty,
\end{align*}
where we used the simple bound $\log y \leq y$ for $y>1$ and 
the last inequality in \eqref{eq:wsupOU-aux1}.
For $J_2$ it follows that
\begin{align*}
J_2 &=  \int_{(0,\infty) \times (0,1]} 
z^2 x^{2\kappa}  \bone(z \leq x^{-\kappa}) 
\int_0^\infty e^{-2x s} \dd s 
\pi(\dd x)  \lambda(\dd z)\\
&\quad + \dint_{(0,\infty)\times (0,1]} 
z^2 x^{2\kappa}  \bone(z > x^{-\kappa})
\int_{x^{-1} \log (z x^{\kappa})}^\infty
 e^{-2x s}
\dd s \pi(\dd x) \lambda(\dd z) \\
&=  \frac{1}{2} 
\dint_{(0,\infty) \times (0,1]} 
\left( x^{2\kappa -1} z^2 \bone(z \leq x^{-\kappa}) +
x^{-1} \bone(z > x^{-\kappa}) \right) \lambda(\dd z) \pi(\dd x).
\end{align*}
For the first term above, we have
\begin{align*}
&\int_{(0,\infty)} x^{2\kappa -1}\int_{(0,1]} z^2 \bone(z \leq x^{-\kappa}) \lambda(\dd z) \pi(\dd x)\\
& = \int_{(0,1]} x^{2\kappa -1} \pi(\dd x) \int_{(0,1]} z^2 \lambda(\dd z) \\
& \quad + \int_{(1,\infty)} x^{2\kappa -1 - \kappa (2-\beta)} \pi(\dd x) 
\int_{(0,1]} z^\beta \lambda(\dd z) < \infty,
\end{align*}
while for the second
\[
\begin{split}
& \int_{(1,\infty)} x^{-1} \int_{(0,1]}  \bone(z > x^{-\kappa}) \lambda(\dd z) \pi(\dd x) \\
& \leq \int_{(1,\infty)} x^{\beta \kappa -1}  \pi(\dd x) \int_{(0,1]} z^\beta \lambda(\dd z) < \infty.
\end{split}
\]
\end{proof}

Note also that
\begin{equation} \label{eq:wsupOU-det}
\dint_{(0,\infty)\times \R} f(x, s) \pi(\dd x) \dd s = \int_{(0,\infty)} x^{\kappa-1} \pi(\dd x).
\end{equation}

Combining Lemmas \ref{lemma:wsupOU-fv-int}, \ref{lemma:wsupOU-fv-suff},
\ref{lemma:wsupOU-comp}, and \eqref{eq:wsupOU-det} we can give
sufficient conditions for the existence of the 
power-weighted supOU process in \eqref{eq:new:def}. For instance, a simple sufficient condition for existence is that $\int_{(1,\infty)} \log z \lambda(\dd z)<\infty$, $\int_{(0,1]} x^{-1} \pi (\dd x) <\infty$, and $\int_{(1,\infty)} x^{(\beta \vee 1)\kappa-1} \log x \pi(\dd x)<\infty$.

\begin{corollary} \label{corr:wsupOU}
	Assume that the power-weighted supOU process in \eqref{eq:new:def} is well defined, 
    $\int_{(1,\infty)} \log z \lambda(\dd z) < \infty$ and 
    \eqref{eq:wsupOU-fv-intcond} hold.
    If for some $\varepsilon > 0$
	\begin{equation} \label{eq:wsupOU-int-ass}
		\int_{(1,\infty)} x^{\kappa + \varepsilon} \pi(\dd x) < \infty,
	\end{equation}
	then $X$ has a c\`adl\`ag modification.
\end{corollary}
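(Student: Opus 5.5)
The plan is to apply Theorem~\ref{thm:cadlag} with $A=\emptyset$, in the same way as in the proof of Corollary~\ref{corr:supOU}, using the dominating function
\[
g(x,s)=\begin{cases} x^{\kappa}e^{-xs}, & s\ge 0,\\ x^{\kappa}, & s\in[-1,0],\\ 0, & s<-1.\end{cases}
\]
Since $f(x,\cdot)$ is c\`adl\`ag and non-increasing on $[0,\infty)$, \eqref{eq:f-ass-bound} follows directly.

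For \eqref{eq:f-g-ass} I split the $s$-integral into $s\ge 0$ and $s\in[-1,0]$.
\begin{itemize}
\item \emph{The part $s\ge0$.} This is exactly the large-jump ($z>1$) part of the integral in Lemma~\ref{lemma:wsupOU-fv-int}. The estimates \eqref{eq:wsupOU-fv-1}--\eqref{eq:wsupOU-fv-4} restricted to $z>1$ do not use $\int_{(0,1]}z\lambda(\dd z)<\infty$. They only use $\int_{(1,\infty)}\log z\,\lambda(\dd z)<\infty$ and \eqref{eq:wsupOU-fv-intcond}, so this part is finite.
\item \emph{The part $s\in[-1,0]$.} It equals $\dint (x^\kappa z\wedge 1)\,\pi(\dd x)\lambda(\dd z)$ over $z>1$. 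For $x\le1$ I bound $x^\kappa z\le x^{\kappa-1}z$ on $\{x^\kappa z\le1\}$ and $1\le x^{-1}$ on $\{x^\kappa z>1\}$, which reduces it to the terms already controlled. For $x>1$ the bound is $\pi((1,\infty))\overline\lambda(1)$, which is finite by \eqref{eq:wsupOU-int-ass}.
\end{itemize}

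For \eqref{eq:f-int-ass1_mma} and \eqref{eq:f-int-ass2_mma} I repeat the computations \eqref{eq:supOU-aux1}--\eqref{eq:supOU-int-bound} with the exponent $\alpha$ in place of $2$ and the extra factor $x^{\alpha\kappa}$. This gives
\[
\dint (f(x,t-u)-f(x,-u))^\alpha\,\pi(\dd x)\dd u\le c\int_{(0,\infty)} x^{\alpha\kappa-1}\bigl((xt)^{1+\varepsilon'}\wedge1\bigr)\pi(\dd x)+ c\int_{(0,\infty)} x^{\alpha\kappa}\Bigl(t\wedge\frac1{x}\Bigr)\pi(\dd x).
\]
The $(0,s]$ contribution to \eqref{eq:f-int-ass2_mma} is bounded the same way, as in the supOU case, by $\int x^{\alpha\kappa-1}(1-e^{-2xt})^{3}\pi(\dd x)$. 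On $x\le1$ I use $x^{\alpha\kappa-1+1+\varepsilon'}\le x^{(\eta\wedge1)\kappa-1}$, which is integrable by \eqref{eq:wsupOU-fv-intcond}. On $x>1$ I need $\int_{(1,\infty)}x^{\alpha\kappa+\varepsilon'}\pi(\dd x)<\infty$. With $\alpha=1$ and $\varepsilon'=\varepsilon\wedge1$ this is exactly \eqref{eq:wsupOU-int-ass}. The result is $t^{1+\varepsilon'}$, which suffices for both \eqref{eq:f-int-ass2_mma} and (for $t\le1$, after adjusting constants for $t>1$ via boundedness) \eqref{eq:f-int-ass1_mma}.

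The main obstacle is the choice of $\alpha$. Theorem~\ref{thm:cadlag} requires $\int_{(0,1]}z^\alpha\lambda(\dd z)<\infty$, so $\alpha=1$ is admissible only when $\beta\le1$ (attained). If $\beta>1$, I would first try $\alpha=\beta$. The $x>1$ moment $x^{\beta\kappa+\varepsilon'}$ can then be controlled by interpolating between \eqref{eq:wsupOU-int-ass} and the second condition in \eqref{eq:wsupOU-fv-intcond}, $\int_{(1,\infty)}x^{\beta\kappa-1}\pi(\dd x)<\infty$. If that interpolation is insufficient, the fallback is to split $X=X^{(1)}+X^{(2)}$ according to $x\le1$ and $x>1$, giving independent MMA processes, and to treat the $x>1$ piece separately.
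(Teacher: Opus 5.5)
\paragraph{There is a genuine gap: the choice $\alpha=1$.} Your bound $c\int x^{\alpha\kappa-1}\bigl((xt)^{1+\varepsilon'}\wedge1\bigr)\pi(\dd x)$, carried over from \eqref{eq:supOU-int-bound}, relies on $(1-e^{-xt})^\alpha\le(xt)^{1+\varepsilon'}\wedge1$. That inequality requires $\alpha\ge1+\varepsilon'$; in the supOU case $\alpha=2$. For $\alpha=1$ it is false, and the failure is real, not an artifact of the estimate.

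Since $\int_{-\infty}^0x^\kappa e^{xu}\dd u=x^{\kappa-1}$, the $u\le0$ part of the integral in \eqref{eq:f-int-ass2_mma} equals
\[
\int_{(0,\infty)}x^{\kappa-1}\bigl[(e^{-xs}-e^{-xt})\wedge(1-e^{-xs})\bigr]\pi(\dd x).
\]
For $s=t/2$, $t\le1$, $x\le1$ the bracket is $e^{-xt/2}(1-e^{-xt/2})\ge xt/(2e)$. So this term is at least $\frac{t}{2e}\int_{(0,1]}x^\kappa\pi(\dd x)$, and similarly at least $c\,t\int_{(1,R]}x^\kappa\pi(\dd x)$ for $t\le1/R$. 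The bound is linear in $t$, so \eqref{eq:f-int-ass2_mma} fails for every nonzero $\pi$.

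The reason is that in $L^1$ a kernel that is Lipschitz in time gains no extra power of $t$. Taking the minimum of the two increments does not help: for $u\le0$ it is a minimum, not a product, of two small factors. On $(0,s]$ the two factors do multiply, which is why that part survives with $\alpha=1$.

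\paragraph{The missing idea, which is the paper's choice.} Take $\alpha$ strictly above $1$ and calibrate it against the moment condition: $\alpha=1+\varepsilon'$ with $\varepsilon'=\varepsilon/(\kappa+1)$, after shrinking $\varepsilon$ so that $\alpha\le2$. Then $(1-e^{-xt})^\alpha\le(xt)^{1+\varepsilon'}\wedge1$. The moment that appears is $\int x^{\alpha\kappa+\varepsilon'}\pi(\dd x)=\int x^{\kappa+\varepsilon}\pi(\dd x)$. On $(1,\infty)$ this is exactly \eqref{eq:wsupOU-int-ass}, and on $(0,1]$ it is dominated via \eqref{eq:wsupOU-fv-intcond}, since $\kappa+\varepsilon>(\eta\wedge1)\kappa-1$.

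The admissibility condition $\int_{(0,1]}z^{1+\varepsilon'}\lambda(\dd z)<\infty$ is not checked explicitly in the paper. It is automatic when $\int_{(0,1]}z\lambda(\dd z)<\infty$, which is the finite-variation setting of Lemma~\ref{lemma:wsupOU-fv-suff} from which \eqref{eq:wsupOU-fv-intcond} is taken.

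\paragraph{Your $\beta>1$ plan.} It would not close this case either. H\"older interpolation between the exponents $\kappa+\varepsilon$ and $\beta\kappa-1$ only reaches exponents between them. You would need $\beta\kappa+\varepsilon'$, which exceeds both as soon as $\varepsilon\le(\beta-1)\kappa$.

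\paragraph{What is fine.} Your verification of \eqref{eq:f-ass-bound} and \eqref{eq:f-g-ass} is correct. On $x\le1$ it is even slightly more careful than the paper's bound by $\pi((0,\infty))\overline\lambda(1)$, since $\pi$ need not be finite near $0$ when $\kappa>1$.
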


\begin{proof}
	For the conditions \eqref{eq:f-ass-bound} and \eqref{eq:f-g-ass}, take $A = \emptyset$ and
	\[
	g(x,s) =
	\begin{cases}
		x^\kappa e^{-xs}, & s \geq 0, \\
		x^\kappa, & s \in [-1,0], \\
		0, & s < -1.
	\end{cases}
	\]
	Clearly, \eqref{eq:f-ass-bound} holds and
	\[
	\begin{split}
		& \trint_{(0,\infty)^2 \times (1,\infty)} 
		( g(x,s) z \wedge 1) \pi(\dd x) \dd s \lambda(\dd z) \\
		& \leq \pi((0,\infty)) \overline \lambda(1) +
		\int_{(0,1)} x^{\kappa-1} \int_{(1,\infty)} z \bone(z \leq x^{-\kappa}) \lambda(\dd z) \pi(\dd x)\\
		&\quad + \int_{(0,\infty)} x^{-1} \int_{(1,\infty)} 
		( 1 + \log(z x^\kappa) ) \bone(z> x^{-\kappa}) \lambda(\dd z) \pi(\dd x).
	\end{split}
	\]
	The second term is finite by \eqref{eq:wsupOU-fv-1} ($\eta < 1$) and 
    \eqref{eq:wsupOU-fv-2} ($\eta \geq 1$), and the third term is finite by
    \eqref{eq:wsupOU-fv-3} and \eqref{eq:wsupOU-fv-4}.
    
	We now show that \eqref{eq:f-int-ass1_mma} and \eqref{eq:f-int-ass2_mma}
	are satisfied with $\alpha = 1 + \varepsilon'$, where $\varepsilon' = \varepsilon /(\kappa +1)$. 
    For $u \leq s \leq t$
	\[
	f(x,t-u) -f(x,s-u) = x^{\kappa} e^{x(u-s)} ( e^{-x(t-s)} -1).
	\]
	Thus, \eqref{eq:f-int-ass1_mma} holds since
	\begin{equation*}
		\begin{split}
			& \dint_{(0,\infty) \times (-\infty, 0]} 
			|f(x,t-u) -f(x,-u)|^\alpha \pi(\dd x) \dd u \\
			& = \int_{(0,\infty)} x^{\alpha \kappa} \frac{1}{\alpha x} ( 1 - e^{-x t} )^\alpha \pi(\dd x) \\
			& \leq t^{1+\varepsilon'} \frac{1}{\alpha} \int_{(0,\infty)} x^{\alpha \kappa + \varepsilon'} \pi(\dd x),
		\end{split}
	\end{equation*}
	and 
	\[
	\dint_{(0,\infty) \times (0,t]} f(x,t-u)^\alpha \pi(\dd x) \dd u 
	\leq t \int_{(0,\infty)} x^{\alpha \kappa} \pi(\dd x).
	\]
	To show \eqref{eq:f-int-ass2_mma}, note that the previous calculation shows that 
	\[
	\begin{split}
    & \dint_{V \times (-\infty, 0]} 
	| f(x,t-u) - f(x, s-u)|^\alpha \wedge
	| f(x,s-u) - f(x, -u)|^\alpha  \pi(\dd x) \dd u \\
	& \leq c t^{1+\varepsilon'}. 
	\end{split}
    \]
	Since $f(x,s-u) - f(x,-u) = 0$ for $u > s$, it remains to consider the integral over $(0,s]$. We similarly get
	\[
	\begin{split}
		& \dint_{(0,\infty) \times (0,s]} 
		|f(x,t-u) -f(x,s-u)|^\alpha \pi(\dd x) \dd u \\
		& = \int_{(0,\infty)} x^{\alpha \kappa} (1-e^{-x(t-s)})^\alpha   \int_0^s e^{-\alpha xu} 
		\dd u \pi(\dd x) \\
		& \leq 
		\int_{(0,\infty)}  x^{\alpha \kappa} \frac{1}{\alpha x} ( 1 - e^{-2 x t} )^{1 + \alpha} \pi(\dd x) 
		\leq t^{1+\varepsilon'} 2^{\varepsilon'} \int_{(0,\infty)} x^{\alpha \kappa+\varepsilon'} \pi(\dd x).
	\end{split}
	\]
	Therefore, \eqref{eq:f-int-ass2_mma} holds and the claim now follows from Theorem \ref{thm:cadlag}.
\end{proof}

The next lemma shows that condition \eqref{eq:wsupOU-int-ass} is close to optimal. 

\begin{lemma} \label{lemma:wsupOU-noncadlag}
Let 
\[
X(t) = \trint x^\kappa e^{-x(t-s)} z \mu(\dd x, \dd s, \dd z),
\]
and assume that the conditions of Lemma \ref{lemma:wsupOU-fv-suff} hold. If  
\[ 
\int_{(1,\infty)} x^\kappa \pi(\dd x) = \infty,
\]
then for any $h > 0$ almost surely
\[
\sup_{t \in [0,h]} X(t) = \infty.
\]
In particular, $X$ does not have a c\`adl\`ag modification.
\end{lemma}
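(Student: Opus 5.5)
The plan is to imitate the argument used for trawl processes and in Proposition 1 of \cite{GK}. Since the integrand is nonnegative and there is no compensation, $X(t)$ is at least $x^\kappa e^{-x(t-s)} z$ for any single atom $(x,s,z)$ of $\mu$ with $s \le t$. It therefore suffices to show that, for every level $K>0$, with probability one there is an atom $(x,s,z)$ and a time $t \in [0,h]$ with $s\le t$ and $x^\kappa e^{-x(t-s)} z \ge K$. Taking $K \to \infty$ along the integers then gives $\sup_{t\in[0,h]} X(t) = \infty$ a.s. By stationarity it is enough to work on $[0,h]$.

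\textbf{Step 1: the good set.} For an atom with $s \in [0,h]$ I simply choose $t = s$, so the contribution is $x^\kappa z$. Consider the set
\[
B_K = \{ (x,s,z) : x>1,\ s \in [0,h],\ z \ge z_0,\ x^\kappa z \ge K \},
\]
where $z_0>0$ is fixed with $\overline\lambda(z_0) = \lambda([z_0,\infty))\in(0,\infty)$; such $z_0$ exists because $\lambda$ is a nonzero L\'evy measure. Then
\[
\nu(B_K) \ge h\, \overline\lambda(z_0)\, \pi\big(\{x>1 : x^\kappa \ge K/z_0\}\big).
\]

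\textbf{Step 2: the main obstacle.} The difficulty is that $\int_{(1,\infty)} x^\kappa\pi(\dd x)=\infty$ does not force $\pi$ to have infinite mass, so the lower bound in Step 1 may be finite. A Poisson measure then hits $B_K$ only with probability $1-e^{-\nu(B_K)}<1$. To fix this I would use the full mass $\nu(B_K)$, keeping the factor $h$, and rely on the divergence coming from small $z$. Concretely,
\[
\nu(B_K) = h \int_{(1,\infty)} \overline\lambda(K x^{-\kappa})\, \pi(\dd x),
\]
and the aim is to show that this is infinite. Several options are possible:
\begin{enumerate}
\item If $\pi((1,\infty))=\infty$, then $\overline\lambda(Kx^{-\kappa}) \ge \overline\lambda(z_0)$ for large $x$, and the integral diverges directly.
\item If $\pi$ is finite on $(1,\infty)$, I would instead use the finite-variation assumption from Lemma \ref{lemma:wsupOU-fv-suff}, namely $\int_{(0,1]} z\lambda(\dd z)<\infty$ together with the existence condition \eqref{eq:wsupOU-iff}. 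However, \eqref{eq:wsupOU-iff} gives upper bounds, not the lower bounds needed here.
\item As an alternative, I would exploit all times $t\in[0,h]$ and count atoms with $s<0$ as well. The relevant set is $\{x^\kappa e^{-x(-s)_+} z \ge K\}$, whose $s$-section has length of order $x^{-1}\log(x^\kappa z/K)$, plus $h$.
\end{enumerate}
The case of finite $\pi$ is the delicate one; there the divergence of $\int x^\kappa\,\dd\pi$ has to be converted into infinitely many large values.

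\textbf{Step 3: a second-moment route for the finite-$\pi$ case.} Here I would abandon single atoms and use sums. For $t\in[0,h]$, $X(t)\ge \sum x_i^\kappa z_i \bone(s_i\in[t-1/x_i, t])\, e^{-1}$. Averaging over $t$ gives
\[
\frac1h\int_0^h X(t)\,\dd t \ge c \sum_i x_i^{\kappa-1} z_i \bone(s_i\in[0,h]) \wedge \ldots,
\]
which loses a factor $x^{-1}$ and so does not suffice. I therefore expect the correct argument to be a Borel--Cantelli type argument over dyadic shells $x\in[2^n,2^{n+1})$. Let $p_n$ be the probability that some atom in shell $n$ lands in $[0,h]\times\{z\ge z_0\}$. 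By independence across shells, infinitely many shells contribute an atom with $x^\kappa z \ge 2^{n\kappa} z_0 \to\infty$ provided $\sum_n p_n=\infty$, and that atom yields $X(s)\ge 2^{n\kappa}z_0$.

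It then remains to verify that $\sum_n (1-e^{-h\overline\lambda(z_0)\pi([2^n,2^{n+1}))})=\infty$, and this is precisely where the hypothesis must be used. I would check whether infinite $\pi$-mass is in fact forced; otherwise the admissible range of $z$ within shell $n$ should be enlarged to $z\ge K2^{-n\kappa}$, and the divergence of $\sum_n 2^{n\kappa}\pi(\text{shell }n)$ would be combined with the behaviour of $\overline\lambda$ near $0$. This matching is the step I would expect to be the main obstacle.
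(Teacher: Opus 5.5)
Your Step 1 is exactly the paper's argument. Evaluating at $t=\tau_i$ gives $\sup_{t\in[0,h]}X(t)\ge\sup_{\tau_i\in(0,h)}\xi_i^\kappa\zeta_i$, and the number of atoms with $\tau_i\in(0,h)$ and $\xi_i^\kappa\zeta_i>r$ is Poisson with mean $h\int\overline\lambda(rx^{-\kappa})\,\pi(\dd x)$. The paper closes the proof by asserting that $\int_{(1,\infty)}x^\kappa\pi(\dd x)=\infty$ forces $(\pi\times\lambda)(\{x^\kappa z>r\})=\infty$ for every $r>0$. You are right to distrust that step. The obstacle you isolate in Steps 2--3 is not a missing trick: when $\pi((1,\infty))<\infty$ the lemma is false as stated.

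Take $\pi(\dd x)=x^{-\kappa-1}\bone(x>1)\,\dd x$ and $\lambda(\dd z)=z^{-2}\bone(z>1)\,\dd z$ (note that $\lambda$ has unbounded support). Then $\int_{(1,\infty)}x^\kappa\pi(\dd x)=\int_1^\infty x^{-1}\dd x=\infty$. All conditions of Lemma \ref{lemma:wsupOU-fv-suff} hold:
$\int_{(0,1]}z\lambda(\dd z)=0$ and $\int_{(1,\infty)}\log z\,\lambda(\dd z)=1$; $\beta=0$ since $\lambda((0,1])=0$; the first line of \eqref{eq:wsupOU-fv-intcond} is void since $\pi((0,1])=0$; and the second line reads $\int_1^\infty x^{-\kappa-2}(1+\log x)\,\dd x<\infty$.
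However, $\nu$ gives mass $h\,\pi((1,\infty))\,\lambda((1,\infty))=h/\kappa$ to $(0,\infty)\times[0,h]\times(0,\infty)$. So a.s.\ only finitely many atoms have $\tau_i\in[0,h]$. Since $e^{-x(t-s)}\le e^{xs}$ for $s<0\le t$,
\[
\sup_{t\in[0,h]}X(t)\le X(0)+\sum_{\tau_i\in[0,h]}\xi_i^\kappa\zeta_i<\infty\quad\text{a.s.}
\]
In fact the paths are c\`adl\`ag on $[0,h]$: use the M-test for the atoms with $\tau_i<0$ and a finite sum for the rest. So even the ``in particular'' part fails.

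The same example shows why your rescue attempts cannot succeed.
\begin{itemize}
\item Option (iii) fails because atoms with $s<0$ contribute at most $X(0)$ on $[0,h]$.
\item For the dyadic Borel--Cantelli route, even with the enlarged range $z\ge K2^{-n\kappa}$, you get $\sum_n p_n\le h\int_{[1,\infty)}\lambda([Kx^{-\kappa},\infty))\,\pi(\dd x)$. This is the very quantity you were trying to show diverges, and here it is at most $h\,\lambda((0,\infty))\,\pi((1,\infty))<\infty$.
\item In general, $\int_{(0,1]}z\lambda(\dd z)<\infty$ only gives $\overline\lambda(y)=o(y^{-1})$ as $y\downarrow0$, and $\overline\lambda$ may even be bounded. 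So divergence of $\int x^\kappa\,\dd\pi$ cannot be transferred to $\int\overline\lambda(rx^{-\kappa})\,\dd\pi$.
\end{itemize}
What is missing is therefore a hypothesis, not an idea. The conclusion holds, with your Step 1 as the proof, under the assumption $\int_{(1,\infty)}\overline\lambda(rx^{-\kappa})\,\pi(\dd x)=\infty$ for every $r>0$. This holds in particular when $\pi((1,\infty))=\infty$ and $\lambda\neq0$, which is your case (i). The lemma, and the paper's proof, need this strengthened assumption.
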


\begin{proof}
It is enough to prove for $h = 1$. We have
\begin{equation} \label{eq:wsupOU-sup}
\sup_{t \in (0,1)} X(t) = \sup_{t \in (0,1)} \trint x^\kappa z e^{-x(t-s)} 
\mu(\dd x, \dd s, \dd z) \geq 
\sup_{\tau_i \in (0,1)} \xi_i^\kappa \zeta_i.
\end{equation}
Since $\int_{(1,\infty)} x^\kappa \pi(\dd x) = \infty$, 
for each $r > 0$
\[
( \pi \times \lambda) ( \{ ( x,z): x^\kappa z > r \} ) = \infty.
\]
Therefore, the supremum in \eqref{eq:wsupOU-sup} is almost surely infinite.
\end{proof}

\section{Proofs of Theorems \ref{thm:cadlag} and \ref{thm:cont}} 
\label{sect:proof}

Decompose $\Lambda$ as $\Lambda = \Lambda_1 + \Lambda_2$ with
\begin{align*}
\Lambda_1(\dd x, \dd u) &= \int_{(0,1]} z (\mu - \nu)(\dd x, \dd u , \dd z),\\
\Lambda_2(\dd x, \dd u) &= 
m \pi(\dd x) \dd u +
\int_{(1,\infty)} z \mu(\dd x, \dd u , \dd z).
\end{align*}
Without loss of generality, we may and do assume that $m = 0$.
Let 
\begin{equation} \label{eq:X12-def}
X_i (t) = \dint_{V \times \R} f(x,t-u) \Lambda_i(\dd x, \dd u), \quad 
i = 1,2,
\end{equation}
denote the processes associated to $\Lambda_1$ and 
$\Lambda_2$, respectively.  The following result provides a simple condition for the existence of a c\`adl\`ag modification for ID processes driven by $\Lambda_2$.

\begin{lemma} \label{lemma:X2}
Assume that $f(x,\cdot)$ is  c\`adl\`ag for each $x \in V$, there exist a Borel set $A \subset V \times \R$ and a function 
$g: V \to [0,\infty)$, such that $(\pi \times \mathrm{Leb}) (A) < \infty$, and 
conditions \eqref{eq:f-ass-bound} and \eqref{eq:f-g-ass} hold.
Then the process
\[
X_2(t) = \dint_{V \times \R} f(x,t-s)  \Lambda_2(\dd x, \dd s), \,\,t\in[0,1],
\]
is well-defined and c\`adl\`ag. Moreover, if $f(x, \cdot)$ is continuous for each $x \in V$, then $X_2$ is continuous.
\end{lemma}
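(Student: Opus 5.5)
Since $m=0$, we have $\Lambda_2(\dd x,\dd u)=\int_{(1,\infty)} z\,\mu(\dd x,\dd u,\dd z)$, so $X_2(t)$ is a pathwise integral of a nonnegative function against a Poisson random measure. The plan is to show that, almost surely, $\mu$ restricted to $z>1$ has the following structure: only finitely many atoms lie in $A_0 \times (1,\infty)$, where $A_0=\{(x,u):(x,-u)\in A\}$, and the sum over the remaining atoms of $g(\xi_i,-\tau_i)\zeta_i$ is finite. Once this is established, the series defining $X_2(t)$ converges uniformly in $t\in[0,1]$. Since every summand $t\mapsto f(\xi_i,t-\tau_i)\zeta_i$ is c\`adl\`ag (respectively continuous), the limit is c\`adl\`ag (respectively continuous).

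\textbf{Step 1: finitely many atoms in $A_0$.} The reflection $u\mapsto -u$ preserves Lebesgue measure, so $(\pi\times\Leb)(A_0)=(\pi\times\Leb)(A)<\infty$. Hence
\[
\nu(A_0\times(1,\infty))=(\pi\times\Leb)(A)\,\overline\lambda(1)<\infty,
\]
and therefore $\mu$ has almost surely only finitely many atoms $(\xi_i,\tau_i,\zeta_i)$ in $A_0\times(1,\infty)$. Their contribution $\sum_i f(\xi_i,t-\tau_i)\zeta_i$ is a finite sum of c\`adl\`ag (respectively continuous) functions of $t$.

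\textbf{Step 2: summability off $A_0$.} On $A_0^c\times(1,\infty)$, condition \eqref{eq:f-g-ass} reads $\int (g(x,-u)z\wedge 1)\,\dd\nu<\infty$. By the standard criterion for Poisson integrals of nonnegative functions (e.g.\ \cite[Theorem 2.7]{Kyprianou}, as used in Lemma \ref{lemma:wsupOU-fv-int}), this implies that $\sum_i g(\xi_i,-\tau_i)\zeta_i<\infty$ almost surely, the sum running over atoms in $A_0^c\times(1,\infty)$.

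\textbf{Step 3: uniform convergence.} By \eqref{eq:f-ass-bound}, each summand satisfies $\sup_{t\in[0,1]} f(\xi_i,t-\tau_i)\zeta_i\le g(\xi_i,-\tau_i)\zeta_i$, and these bounds are summable by Step 2. The Weierstrass M-test then gives uniform convergence on $[0,1]$ of any enumeration of the series, and a uniform limit of c\`adl\`ag (respectively continuous) functions is c\`adl\`ag (respectively continuous). For each fixed $t$, the same domination shows that this pathwise sum is the Poisson integral defining $X_2(t)$. In particular $X_2$ is well defined, and the version given by the series has the claimed path regularity.

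The main obstacle is mostly bookkeeping. One has to fix a countable enumeration of the atoms that is measurable, and check that the exceptional null set does not depend on $t$. Both points are handled because the dominating sum in Step 2 does not involve $t$. One also has to correctly match the sign convention $(x,-u)\notin A$ with the atom location $(\xi_i,\tau_i)$, which is exactly what the reflection in Step 1 takes care of.
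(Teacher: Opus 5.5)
Your argument follows the paper's proof. You split off the finitely many atoms of $\mu$ (with $z>1$) that lie in a set of finite $\pi\times\Leb$ measure. You dominate the remaining terms uniformly in $t\in[0,1]$ by $g(\xi_i,-\tau_i)\zeta_i$, and get a.s.\ summability of these bounds from \eqref{eq:f-g-ass} via the Poisson integrability criterion. You then conclude with the M-test and the fact that uniform limits of c\`adl\`ag (continuous) functions are c\`adl\`ag (continuous).

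There is one bookkeeping slip, at exactly the point you flagged as delicate. In Step 2 you assert that \eqref{eq:f-g-ass} says $\int_{A_0^c\times(1,\infty)}(g(x,-u)z\wedge 1)\,\dd\nu<\infty$. In fact \eqref{eq:f-g-ass} integrates $g(x,-u)z\wedge 1$ over $(x,u)\in A^c$, not over $A_0^c$, and the two regions differ unless $A$ is invariant under $u\mapsto -u$. So after your reflection, the domination \eqref{eq:f-ass-bound} holds off $A_0$, but integrability is only given off $A$. The paper's proof has the mirror-image slip: it sums over atoms with $(\xi_i,\tau_i)\in A^c$ and applies \eqref{eq:f-ass-bound} there, although \eqref{eq:f-ass-bound} is stated for $(x,-u)\notin A$.

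The repair is immediate. Take the exceptional set to be $A\cup A_0$, whose $\pi\times\Leb$ measure is at most $2(\pi\times\Leb)(A)<\infty$, so Step 1 still gives only finitely many atoms there a.s. Off $A\cup A_0$ you have both the domination (because $(x,u)\notin A_0$) and the integrability (because $(x,u)\notin A$, so the integral over $(A\cup A_0)^c$ is bounded by the one in \eqref{eq:f-g-ass}). Steps 2 and 3 then go through verbatim.
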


\begin{proof}
		Let $(\xi_k,\tau_k,\zeta_k)_{k\geq 0}$ be the points of $\Lambda_2$. The process 
		$X_2$ can be written as
		\begin{equation*} 
			X_2(t) = \left( \dint_A + \dint_{A^c} \right) 
		f(x, t-s)	 \Lambda_2 (\dd x, \dd s) =: J_1 (t) + J_2(t).
		\end{equation*}
		Since $\nu(A \times (1,\infty) ) = (\pi \times \mathrm{Leb}) (A) \lambda((1,\infty)) 
		< \infty$, 
		\[
		J_1(t) = \sum_{(\xi_i, \tau_i) \in A} f(\xi_i, t- \tau_i) \zeta_i,
		\]
	is almost surely a finite sum. Therefore, if $f(x, \cdot)$ is c\`adl\`ag for each $x \in V$, then $J_1$ is c\`adl\`ag as well. Similarly, $J_1$ is continuous whenever $f(x, \cdot)$ is continuous for each $x \in V$.
By the assumption, for all $t \in [0,1]$
		\begin{equation*}
			\begin{split}
				 \dint_{A^c}  f(x,t-s) \Lambda_2(\dd x, \dd s) & = \sum_{(\xi_i, \tau_i) \in A^c} \zeta_i  f(\xi_i, t-\tau_i) \\
				& \leq \sum_{(\xi_i, \tau_i) \in A^c} \zeta_i g(\xi_i, -\tau_i) \\
				& = \dint_{A^c} g(x,-u) \Lambda_2(\dd x, \dd u).
			\end{split}
		\end{equation*}
The last integral is a.s.~finite by \eqref{eq:f-g-ass}. Thus, if $f(x, \cdot)$ is continuous for each $x \in V$, then
		the  Weierstrass M-test (e.g.~\cite[Theorem 7.10]{rudin1976}) implies that 
		a.s.~the series $\sum_{(\xi_i, \tau_i) \in A^c} \zeta_i  f(\xi_i,t- \tau_i)$ 
		converges uniformly in $t$ to a continuous limit. Furthermore, if $f(x, \cdot)$ is c\`adl\`ag for each $x \in V\times\R$, then the 
		same argument as in the Weierstrass M-test shows that the limit exists,
		and it is c\`adl\`ag, see \cite[Problem V.1]{Pollard}.
	\end{proof}

The following lemma will be crucial in our analysis.

\begin{lemma} \label{lemma:decomp-ineq}
For any $\alpha \in [1,2]$ there exists $C = C_\alpha > 0$ only depending on 
$\alpha$, such that for any $y>0$ and $0 \leq s < t <\infty$
\[
\begin{split}
	& \p ( |X_1(t) - X_1(s) | \wedge |X_1(s) - X_1(0) | > y ) \\
	& \leq C y^{-2\alpha}
	\left( \int_{(0,1]} z^\alpha \lambda(\dd z) \right)^2
	\dint_{V \times \R} | f(x,t-u) - f(x, s-u)|^\alpha \pi(\dd x) \dd u \\
	& \quad \times 
	\dint_{V \times \R} | f(x,s-u) - f(x, -u)|^\alpha \pi(\dd x) \dd u \\
	& + C y^{-\alpha}
	\int_{(0,1]} z^\alpha \lambda(\dd z)
	\dint_{V \times \R} 
	\left( | f(x,t-u) - f(x, s-u)| \right. \\
    & \hspace{5cm} \left. \wedge | f(x,s-u) - f(x, -u)| \right)^\alpha 
	\pi(\dd x) \dd u.
	\end{split}
	\] 
\end{lemma}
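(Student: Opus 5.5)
The plan is to reduce the event $\{|X_1(t)-X_1(s)|\wedge|X_1(s)-X_1(0)|>y\}$ to events involving either a single compensated Poisson integral, or two \emph{independent} ones. Then apply a moment bound for compensated Poisson integrals. Write
\[
h_1(x,u)=f(x,t-u)-f(x,s-u),\qquad h_2(x,u)=f(x,s-u)-f(x,-u),
\]
so that $Y_i:=\dint h_i\,\dd\Lambda_1=\trint_{V\times\R\times(0,1]} h_i(x,u)\,z\,(\mu-\nu)(\dd x,\dd u,\dd z)$ for $i=1,2$, with $Y_1=X_1(t)-X_1(s)$ and $Y_2=X_1(s)-X_1(0)$. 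Let $B=\{(x,u): |h_1(x,u)|\le |h_2(x,u)|\}$, a measurable set. Split each $Y_i$ according to $B$ and $B^c$:
\[
Y_1=Y_1^{B}+Y_1^{B^c},\qquad Y_2=Y_2^{B}+Y_2^{B^c},
\]
where $Y_i^D$ is the integral of $h_i\bone_D$. On $B$ we have $|h_1|=|h_1|\wedge|h_2|$, and on $B^c$ we have $|h_2|=|h_1|\wedge|h_2|$. Hence $Y_1^B$ and $Y_2^{B^c}$ are integrals of kernels bounded by the minimum. Moreover, $Y_1^{B^c}$ and $Y_2^{B}$ are integrals over disjoint sets with respect to the independently scattered measure, so they are independent.

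Next comes an elementary inclusion. If $|Y_1|>y$ and $|Y_2|>y$, then either $|Y_1^B|>y/2$, or $|Y_2^{B^c}|>y/2$, or both $|Y_1^{B^c}|>y/2$ and $|Y_2^B|>y/2$. Indeed, if the first two fail, the triangle inequality forces the third. Consequently
\[
\p(|Y_1|\wedge|Y_2|>y)\le \p(|Y_1^B|>y/2)+\p(|Y_2^{B^c}|>y/2)+\p(|Y_1^{B^c}|>y/2)\,\p(|Y_2^{B}|>y/2),
\]
where independence was used in the last term. Each probability is then bounded by Markov's inequality with the $\alpha$-th moment.

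For the moment bound, I would prove the following. There is $C_\alpha$ such that for every measurable $h$ with the right-hand side finite,
\[
\E\Big|\trint h(x,u)z\,(\mu-\nu)(\dd x,\dd u,\dd z)\Big|^\alpha\le C_\alpha\int_{(0,1]}z^\alpha\lambda(\dd z)\dint|h|^\alpha\,\pi(\dd x)\dd u,
\qquad \alpha\in[1,2].
\]
This is the key step and, I expect, the main technical obstacle, since it has to hold uniformly and the integrals are defined only as limits. I would first take $h$ bounded with support of finite $\pi\times\Leb$ measure, and truncate $z$ away from $0$, so that the integral is a finite sum minus its compensator. Order the points along an auxiliary time coordinate (for instance $u$, after restricting to a bounded range) so that it becomes the terminal value of a purely discontinuous martingale $M$. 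The Burkholder--Davis--Gundy inequality gives $\E|M|^\alpha\le C_\alpha\E[M]^{\alpha/2}=C_\alpha\E\big(\sum (h z)^2\big)^{\alpha/2}$. Since $\alpha/2\le1$, subadditivity yields $\le C_\alpha\E\sum|hz|^\alpha=C_\alpha\int|hz|^\alpha\,\dd\nu$. A general $h$ is then handled by $L^\alpha$-approximation and Fatou's lemma, using the convergence in probability that defines the stochastic integral. The case $\alpha=1$ also follows directly from $\E|\int\cdot\,\dd(\mu-\nu)|\le 2\int|hz|\,\dd\nu$.

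Plugging the moment bound in, the first two terms are bounded by $C y^{-\alpha}\int_{(0,1]}z^\alpha\lambda(\dd z)\dint(|h_1|\wedge|h_2|)^\alpha$. For the product term, use $|h_1\bone_{B^c}|\le|h_1|$ and $|h_2\bone_B|\le|h_2|$; this gives $C y^{-2\alpha}\big(\int_{(0,1]} z^\alpha\lambda(\dd z)\big)^2\dint|h_1|^\alpha\dint|h_2|^\alpha$. This is exactly the claimed inequality, with $C$ depending only on $\alpha$ (through $C_\alpha$ and the factors $2^{\alpha}$ and $2^{2\alpha}$).
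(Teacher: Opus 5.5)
Your proposal is correct and is essentially the paper's argument. The paper uses the same split into $A_+(s,t)=\{|h_1|\ge|h_2|\}$ and $A_-(s,t)=\{|h_1|<|h_2|\}$ (your $B^c$ and $B$, up to ties), the same independence of the two cross integrals, and Markov's inequality with the moment bound $\E|\int h\,\dd\Lambda_1|^\alpha\le c\int_{(0,1]}z^\alpha\lambda(\dd z)\int|h|^\alpha\,\dd\pi\,\dd u$; the paper quotes that bound from Marinelli--R\"ockner, whereas you derive it via BDG and subadditivity. The only other difference is that the paper first splits the time axis into $(-\infty,0]$, $(0,s]$, $(s,\infty)$ and runs the $A_\pm$ argument on each piece, a step your version shows is unnecessary (it only costs the paper the constants $3^\alpha, 6^\alpha$ instead of your $2^\alpha$).
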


\begin{proof}
Fix $0 \leq s < t$ and let $B_1 = V \times (-\infty, 0]$, 
$B_2 = V \times (0,s]$, and $B_3 = V \times (s,\infty)$.
Decompose the increments as
\begin{equation*} 
	\begin{split}
		& X_1(t) - X_1(s) \\ & = 
		\left( \dint_{B_1} + 
		\dint_{B_2} + \dint_{B_3} \right)
		\left( f(x,t-u) - f(x, s-u) \right) \Lambda_1(\dd x, \dd u) \\
		& =:  I_1(s,t) + I_2(s,t) + I_3(s,t),
	\end{split}
\end{equation*}
and similarly
\begin{equation*} 
	\begin{split}
		& X_1(s) - X_1(0) \\
		& = 
		\left( \dint_{B_1} + 
		\dint_{B_2} + \dint_{B_3} \right)
		\left( f(x,s-u) - f(x,-u) \right) \Lambda_1(\dd x, \dd u)  \\
		& =:  I_1(0,s) + I_2(0,s) + I_3(0,s).
	\end{split}
\end{equation*}

Since the stochastic integrals are taken on disjoint sets, 
	$I_1, I_2$ and $I_3$ are independent.
	Therefore, 
	\begin{equation} \label{eq:dec-lemma-aux1}
	\begin{split}
		& \p ( |X_1(t) - X_1(s) | \wedge |X_1(s) - X_1(0) | > y ) \\
		& \leq \sum_{i\neq j} 
		\p ( |I_i(s,t) | > y/3 ) \p ( | I_j(0,s) | > y/3 ) \\
		& \quad  + \sum_{i=1}^3 \p ( |I_i(s,t) | > y/3, | I_i(0,s) | > y/3 ).
	\end{split}
	\end{equation}
	Introduce the notation
\begin{equation*} 
\begin{split}
& A_{+}(s,t) = \{ (x,u): |f(x,t-u) - f(x,s-u) | 
\geq |f(x,s-u) - f(x, -u) |  \}, \\
& A_{-}(s,t) = \{ (x,u): |f(x,t-u) - f(x,s-u) | < |f(x,s-u) - f(x, -u) | \},
\end{split}
\end{equation*}
and put for $i = 1,2,3$
\[	
\begin{split}
I_{i,\pm}(s,t) &=  \dint_{B_i \cap A_{\pm}(s,t)}
(f(x,t-u) - f(x,s-u)) \Lambda_1(\dd x , \dd u), \\
I_{i,\pm}(0,s) &=  \dint_{B_i \cap A_{\pm}(s,t)}
(f(x,s-u) - f(x,-u)) \Lambda_1(\dd x , \dd u).
\end{split}
\]
Note that $I_{i,+}$ and $I_{i,-}$ are independent.
Using also that
\[
\{ |I_i(0,s) | > {y}/{3} \} \subset 
\{ |I_{i,+}(0,s) | > {y}/{6} \}
\cup \{ |I_{i,-}(0,s) | > {y}/{6} \},
\]
we obtain 
\begin{equation} \label{eq:dec-lemma-aux2}
\begin{split}
& \p ( |I_i(s,t) | > y/3, | I_i(0,s) | > y/3 ) \\
& \leq 
\p ( |I_{i,+}(0,s)| > y/6) + \p ( | I_{i, -}(s,t) | > y/6) \\
& \quad  + \p ( |I_{i,+}(s,t)| > y/6)  \p ( | I_{i, -}(0,s) | > y/6).
\end{split}
\end{equation}
According to Theorem 1 in \cite{MR}
there exists $c_\alpha = c > 0$ such that for any measurable $h$
\begin{equation} \label{eq:MR-ineq}
\E \left| \dint_{V \times \R} h(x,u) \Lambda_1(\dd x, \dd u) \right|^\alpha 
\leq c \int_{(0,1]} z^\alpha \lambda(\dd z) 
\dint_{V \times \R} |h(x,u)|^\alpha \pi(\dd x) \dd u.
\end{equation}
The latter combined with Markov's inequality implies
\begin{equation} \label{eq:dec-lemma-aux3}
\begin{split}
& \sum_{i=1}^3 \left( \p ( |I_{i,+}(0,s)| > y/6) + \p ( | I_{i, -}(s,t) | > y/6)
\right) \\
& \leq \frac{c 6^\alpha}{y^\alpha} \sum_{i=1}^3 
\int_{(0,1]} z^\alpha \lambda(\dd z) 
\left( 
\dint_{B_i \cap A_{+}(s,t)} |f(x,s-u) - f(x,-u)|^\alpha \pi(\dd x) \dd u
\right. \\
& \quad \left. 
+ \dint_{B_i \cap A_{-}(s,t)} |f(x,t-u) - f(x,s-u)|^\alpha \pi(\dd x) \dd u
\right) \\
& = \frac{c 6^\alpha \int_{(0,1]} z^\alpha \lambda(\dd z) }{y^\alpha} 
\dint_{V \times \R} (|f(x,s-u) - f(x,-u)| \\
& \hspace{4cm} \wedge |f(x,t-u) - f(x,s-u)|)^\alpha
\pi(\dd x) \dd u.
\end{split}
\end{equation}
Similarly,
\begin{equation} \label{eq:dec-lemma-aux4}
\begin{split}
& \sum_{i\neq j} \p ( |I_i(s,t) | > y/3 ) \p ( | I_j(0,s) | > y/3 ) \\
& \leq \left( \sum_{i=1}^3 \p ( |I_i(s,t) | > y/3) \right)
\left( \sum_{i=1}^3 \p ( |I_i(0,s) | > y/3) \right) \\
& \leq 
\left( 
\frac{c 3^{\alpha} \int_{(0,1]} z^\alpha \lambda(\dd z) }{y^\alpha}
\right)^2
\dint_{V \times \R} | f(x,t-u) - f(x, s-u)|^\alpha \pi(\dd x) \dd u \\
&\quad \times 
\dint_{V \times \R} | f(x,s-u) - f(x, -u)|^\alpha \pi(\dd x) \dd u,
\end{split}
\end{equation}
and 
\begin{equation} \label{eq:dec-lemma-aux5}
\begin{split}
& \sum_{i=1}^3 
\p ( |I_{i,+}(s,t) | > y/6 ) \p ( | I_{i,-}(0,s) | > y/6 )  \\
& \leq 
\left( c \frac{6^{\alpha} \int_{(0,1]} z^\alpha \lambda(\dd z) }{y^\alpha}
\right)^2
\sum_{i=1}^3
\dint_{B_i} | f(x,t-u) - f(x, s-u)|^\alpha \pi(\dd x) \dd u \\
& \quad \times 
\dint_{B_i} | f(x,s-u) - f(x, -u)|^\alpha \pi(\dd x) \dd u \\
& \leq 
\left( c \frac{6^{\alpha} \int_{(0,1]} z^\alpha \lambda(\dd z) }{y^\alpha}
\right)^2
\dint_{V \times \R} | f(x,t-u) - f(x, s-u)|^\alpha \pi(\dd x) \dd u \\
& \quad \times 
\dint_{V \times \R} | f(x,s-u) - f(x, -u)|^\alpha \pi(\dd x) \dd u.
\end{split}
\end{equation}
Substituting back \eqref{eq:dec-lemma-aux2}, \eqref{eq:dec-lemma-aux3},
\eqref{eq:dec-lemma-aux4}, and \eqref{eq:dec-lemma-aux5} into
\eqref{eq:dec-lemma-aux1}, the result follows.
\end{proof}

Combining Lemmas \ref{lemma:X2} and \ref{lemma:decomp-ineq} with 
Billingsley's classical condition (\cite[Theorem 13.6]{Billingsley})
we obtain the following.

\begin{proof}[Proof of Theorem \ref{thm:cadlag}]
In view of Lemma \ref{lemma:X2}, we only need to show that $X_1$ has a c\`adl\`ag modification. We use \cite[Theorem 13.6]{Billingsley}. Specifically, we need to verify that $X_1$ is continuous in probability, and that there exist $\beta \geq 0$, $\varepsilon > 0$, $C > 0$, such show that for $0 < s < t$ and $y > 0$
\begin{equation*}
\p ( |X_1(t) - X_1(s) | \wedge |X_1(s) - X_1(0) | > y ) \leq C y^{-\beta} t^{1 + \varepsilon},
\end{equation*}
due to the stationarity of $X_1$.	That the latter condition holds follows directly from Lemma \ref{lemma:decomp-ineq} and our assumptions. Therefore, the only thing needed to be proved is that $X_1$ is continuous in probability. This follows easily 
		from Markov's inequality combined with \eqref{eq:MR-ineq}. Indeed, for 
		$t > 0$ fixed, $0\leq s<t$, and $\varepsilon > 0$ arbitrary
		\begin{equation}\label{eq:mom-cond-cont}
		\begin{split}
			& \p ( |X_1(t) - X_1(s) | > \varepsilon ) \\
			& \leq \varepsilon^{-\alpha} \E |X_1(t) - X_1(s)|^\alpha \\
			& \leq \varepsilon^{-\alpha} c \int_{(0,1]} z^\alpha \lambda(\dd z)
			\dint_{V \times \R} 
			\left| f(x,t-u) - f(x,s-u) \right|^{\alpha} 
			\pi(\dd x) \dd u,
		\end{split}
		\end{equation}		
		where the upper bound tends to 0 as $s \uparrow t$ by \eqref{eq:f-int-ass1_mma}.
\end{proof}

\begin{proof}[Proof of Theorem \ref{thm:cont}]
    The proof follows from Lemma \ref{lemma:X2} and Kolmogorov's continuity theorem applied to $X_1$ using the moment bound \eqref{eq:mom-cond-cont}.
\end{proof}

\bigskip
\textbf{Acknowledgments}: DG was supported by the Croatian Science Foundation under the project Scaling in Stochastic Models (HRZZ-IP-2022-10-8081).


\begin{thebibliography}{10}

\bibitem{Barn}
O.~E. Barndorff-Nielsen.
\newblock Superposition of {O}rnstein-{U}hlenbeck type processes.
\newblock {\em Teor. Veroyatnost. i Primenen.}, 45(2):289--311, 2000.

\bibitem{barndorff2011statID}
O.~E. Barndorff-Nielsen.
\newblock Stationary infinitely divisible processes.
\newblock {\em Braz. J. Probab. Stat.}, 25(3):294--322, 2011.

\bibitem{barndorff2018ambit}
O.~E. Barndorff-Nielsen, F.~E. Benth, and A.~E.~D. Veraart.
\newblock {\em Ambit Stochastics}, volume~88 of {\em Probability Theory and Stochastic Modelling}.
\newblock Springer, Cham, 2018.

\bibitem{bnleonenko2005}
O.~E. Barndorff-Nielsen and N.~N. Leonenko.
\newblock Spectral properties of superpositions of {O}rnstein-{U}hlenbeck type processes.
\newblock {\em Methodol. Comput. Appl. Probab.}, 7(3):335--352, 2005.

\bibitem{barndorff2014intTrawl}
O.~E. Barndorff-Nielsen, A.~Lunde, N.~Shephard, and A.~E.~D. Veraart.
\newblock Integer-valued trawl processes: a class of stationary infinitely divisible processes.
\newblock {\em Scand. J. Stat.}, 41(3):693--724, 2014.

\bibitem{barndorff2011multivariate}
O.~E. Barndorff-Nielsen and R.~Stelzer.
\newblock Multivariate sup{OU} processes.
\newblock {\em Ann. Appl. Probab.}, 21(1):140--182, 2011.

\bibitem{basse2020sufficient}
A.~Basse-O'Connor.
\newblock Sufficient conditions for c{\`a}dl{\`a}g sample paths of stable superpositions of {O}rnstein--{U}hlenbeck processes.
\newblock In {\em 62nd ISI World Statistics Congress 2019}, pages 74--79. Department of Statistics Malaysia, 2020.

\bibitem{BasseRos2013b}
A.~Basse-O'Connor and J.~Rosi\'nski.
\newblock Characterization of the finite variation property for a class of stationary increment infinitely divisible processes.
\newblock {\em Stochastic Process. Appl.}, 123(6):1871--1890, 2013.

\bibitem{BasseRos}
A.~Basse-O'Connor and J.~Rosi\'nski.
\newblock On the uniform convergence of random series in {S}korohod space and representations of c\`adl\`ag infinitely divisible processes.
\newblock {\em Ann. Probab.}, 41(6):4317--4341, 2013.

\bibitem{Billingsley}
P.~Billingsley.
\newblock {\em Convergence of probability measures}.
\newblock Wiley Series in Probability and Statistics: Probability and Statistics. John Wiley \& Sons, Inc., New York, second edition, 1999.
\newblock A Wiley-Interscience Publication.

\bibitem{CDH}
C.~Chong, R.~C. Dalang, and T.~Humeau.
\newblock Path properties of the solution to the stochastic heat equation with {L}\'evy noise.
\newblock {\em Stoch. Partial Differ. Equ. Anal. Comput.}, 7(1):123--168, 2019.

\bibitem{curato2019}
I.~V. Curato and R.~Stelzer.
\newblock Weak dependence and {GMM} estimation of sup{OU} and mixed moving average processes.
\newblock {\em Electron. J. Stat.}, 13(1):310--360, 2019.

\bibitem{Fasen05}
V.~Fasen.
\newblock Extremes of regularly varying {L}\'{e}vy-driven mixed moving average processes.
\newblock {\em Adv. in Appl. Probab.}, 37(4):993--1014, 2005.

\bibitem{GK2}
D.~Grahovac and P.~Kevei.
\newblock Almost sure growth of integrated sup{OU} processes.
\newblock {\em Bernoulli}, 31(4):2597--2623, 2025.

\bibitem{GK}
D.~Grahovac and P.~Kevei.
\newblock Tail behavior and almost sure growth rate of superpositions of {O}rnstein-{U}hlenbeck-type processes.
\newblock {\em J. Theoret. Probab.}, 38(1):Paper No. 1, 15, 2025.

\bibitem{GLT19}
D.~Grahovac, N.~N. Leonenko, and M.~S. Taqqu.
\newblock Limit theorems, scaling of moments and intermittency for integrated finite variance sup{OU} processes.
\newblock {\em Stochastic Process. Appl.}, 129(12):5113--5150, 2019.

\bibitem{GLT2019Limit}
D.~Grahovac, N.~N. Leonenko, and M.~S. Taqqu.
\newblock Limit theorems, scaling of moments and intermittency for integrated finite variance sup{OU} processes.
\newblock {\em Stochastic Process. Appl.}, 129(12):5113--5150, 2019.

\bibitem{GLT21}
D.~Grahovac, N.~N. Leonenko, and M.~S. Taqqu.
\newblock Intermittency and infinite variance: the case of integrated sup{OU} processes.
\newblock {\em Electron. J. Probab.}, 26:Paper No. 56, 31, 2021.

\bibitem{Kyprianou}
A.~E. Kyprianou.
\newblock {\em Fluctuations of {L}\'{e}vy Processes with Applications}.
\newblock Universitext. Springer, Heidelberg, second edition, 2014.
\newblock Introductory lectures.

\bibitem{Maejima}
M.~Maejima.
\newblock A self-similar process with nowhere bounded sample paths.
\newblock {\em Z. Wahrsch. Verw. Gebiete}, 65(1):115--119, 1983.

\bibitem{MR}
C.~Marinelli and M.~R\"{o}ckner.
\newblock On maximal inequalities for purely discontinuous martingales in infinite dimensions.
\newblock In {\em S\'{e}minaire de {P}robabilit\'{e}s {XLVI}}, volume 2123 of {\em Lecture Notes in Math.}, pages 293--315. Springer, Cham, 2014.

\bibitem{pakkanen2021}
M.~S. Pakkanen, R.~Passeggeri, O.~Sauri, and A.~E.~D. Veraart.
\newblock Limit theorems for trawl processes.
\newblock {\em Electron. J. Probab.}, 26:Paper No. 116, 36, 2021.

\bibitem{Pollard}
D.~Pollard.
\newblock {\em Convergence of stochastic processes}.
\newblock Springer Series in Statistics. Springer-Verlag, New York, 1984.

\bibitem{rajputrosinski1989}
B.~S. Rajput and J.~Rosi\'{n}ski.
\newblock Spectral representations of infinitely divisible processes.
\newblock {\em Probab. Theory Related Fields}, 82(3):451--487, 1989.

\bibitem{rm2022}
A.~R{\o}nn-Nielsen and M.~Stehr.
\newblock Extremes of {L}\'{e}vy-driven spatial random fields with regularly varying {L}\'{e}vy measure.
\newblock {\em Stochastic Process. Appl.}, 150:19--49, 2022.

\bibitem{rosinski1989}
J.~Rosi\'{n}ski.
\newblock On path properties of certain infinitely divisible processes.
\newblock {\em Stochastic Process. Appl.}, 33(1):73--87, 1989.

\bibitem{rudin1976}
W.~Rudin.
\newblock {\em Principles of mathematical analysis}.
\newblock International Series in Pure and Applied Mathematics. McGraw-Hill Book Co., New York-Auckland-D\"usseldorf, third edition, 1976.

\bibitem{SchnurrWoerner2011}
A.~Schnurr and J.~H.~C. Woerner.
\newblock Well-balanced {L}\'evy driven {O}rnstein-{U}hlenbeck processes.
\newblock {\em Stat. Risk Model.}, 28(4):343--357, 2011.

\bibitem{talarczyk2020}
A.~Talarczyk and {\L}.~Treszczotko.
\newblock Limit theorems for integrated trawl processes with symmetric {L}\'{e}vy bases.
\newblock {\em Electron. J. Probab.}, 25:Paper No. 117, 24, 2020.

\end{thebibliography}

\end{document}